
\documentclass[letterpaper]{siamonline1116}

\usepackage{amsfonts}
\usepackage{graphicx}
\usepackage{epstopdf}
\usepackage{algorithmic}
\ifpdf
  \DeclareGraphicsExtensions{.eps,.pdf,.png,.jpg}
\else
  \DeclareGraphicsExtensions{.eps}
\fi
\usepackage[latin1]{inputenc}
\usepackage[english]{babel}
\usepackage{listings,relsize}
\usepackage{amsmath}
\usepackage{amssymb}
\usepackage{eucal}
\usepackage{mathrsfs}
\usepackage[hang,small,bf]{caption}
\usepackage[labelformat=simple]{subcaption}

\usepackage{bm}
\usepackage{url}
\usepackage{booktabs}
\usepackage{hyperref}
\usepackage{harvard}
\usepackage{siunitx}
\usepackage{alltt}
\usepackage{color}
\usepackage{fullpage} 
\usepackage{geometry}
\usepackage{cite}
\pdfminorversion=4
\definecolor{string}{rgb}{0.7,0.0,0.0}
\definecolor{comment}{rgb}{0.13,0.54,0.13}
\usepackage[draft]{todonotes}  		

\numberwithin{theorem}{section}

\newcommand{\TheTitle}{Canards in stiction: on solutions of a friction oscillator  by  regularization}


\title{{\TheTitle}\thanks{Submitted to the editors March 13, 2017.
\funding{ }}}

\author{
  Elena Bossolini\thanks{Department of Applied Mathematics and Computer Science, Technical University of Denmark,
Kongens Lyngby 2800, DK
    (\email{ebos@dtu.dk},\email{mobr@dtu.dk},\email{krkri@dtu.dk}).}
  \and
  Morten Br\o ns\footnotemark[2]  \and
  Kristian Uldall Kristiansen\footnotemark[2]
}

\usepackage{amsopn}


\graphicspath{{ArXiv_Figures/}}
\newcommand{\vare}{\varepsilon}
\newcommand{\be}{\begin{equation}}
\newcommand{\ee}{\end{equation}}

\DeclareMathOperator{\sign}{sign}
\hypersetup{colorlinks=true,linkcolor=blue,pdfauthor= Elena Bossolini}

\newsiamremark{remark}{Remark}  
\crefname{remark}{Remark}{Remarks}  


\begin{document}
\maketitle

\begin{abstract}
We study the solutions of a friction oscillator subject to stiction. This discontinuous model is non-Filippov, and the concept of Filippov solution cannot be used. Furthermore some Carath\'eodory solutions  are unphysical. Therefore we introduce the  concept of stiction solutions: these are the Carath\'eodory solutions that are physically relevant, i.e. the ones that follow the stiction law.  However, we find that some of the stiction solutions  are forward non-unique  in subregions of the slip onset. We call these solutions singular, in contrast to the regular stiction solutions that are forward unique. In order to further the understanding of the non-unique dynamics, we introduce a regularization of the model. This gives a singularly perturbed problem that captures the main features of the original discontinuous problem. We identify a repelling slow manifold that separates the forward slipping to forward sticking solutions, leading to a high sensitivity to the initial conditions. On this slow manifold we find canard trajectories, that have the physical interpretation of delaying the slip onset. We show with numerics that the regularized problem has a family of periodic orbits interacting with the canards.  We observe that this family has a saddle stability and that it  connects, in the rigid body limit, the two  regular, slip-stick branches   of the discontinuous problem, that were otherwise disconnected.  
\end{abstract}

\begin{keywords}
Stiction, friction oscillator, non-Filippov,  regularization, canard, slip-stick, delayed slip onset
 \end{keywords}

\begin{AMS}
34A36, 34E15, 34C25, 37N15, 70E18, 70E20
\end{AMS}

\section{Introduction}
Friction is a tangential reaction force that appears whenever two rough surfaces are in  contact.  This energy-dissipating force is desirable  in car brakes \cite{cantoni2009a}, it occurs at the boundaries  of the Earth's crustal plates during fault  slip \cite{Nakatani2001,woodhouse2015a}, and it  causes the sound of string instruments \cite{akay2002a,feeny1998a}. Friction may  initiate undesirable noise, like the squeaking  of the chalk on a blackboard, or the squealing of  train wheels in tight curves \cite{heckl2000a}. It may also induce chattering vibrations, as   in  machine tools \cite{pratt1981a}, and in relay feedback systems \cite{olsson2001a}.\\ 
The variety  of examples above-mentioned underlines the importance of understanding the friction force, although this is far from being accomplished.  For instance, little is known on the  shape of the friction law for small  velocities, 
 as it is difficult to verify it experimentally \cite{putelat2010a,hinrichs1996a}. Yet, it is recognized that the maximal value of the friction force at {\it stick}, that means at zero relative velocity, is higher than  at {\it slip}, when the two surfaces are in relative motion \cite{rabinowicz1951a}.    Several models of friction exist in the literature\cite{olsson1997a,pennestri2016a,wojewoda2008a,woodhouse2015a}, and most of them are discontinuous  at stick, like the   stiction model. 
 Stiction   defines  a maximum {\it static} friction force during {stick} and a lower,  {\it dynamic} friction force at  slip. In subsets of the discontinuity, the stiction model has solutions that are forward non-unique.  In these subsets,  a numerical simulation requires a choice of forward integration, possibly discarding   solutions. \\
This manuscript aims to unveil, through a mathematical analysis,  new features of the stiction law around the {\it slip onset},  i.e. when the surfaces start to slip.  The manuscript shows that, in certain circumstances,  the slip onset is delayed with respect to the instant where the external forces have equalled the maximum static friction. 
This result,  that in principle  could be tested experimentally, has physical implications that may further the understanding of phenomena related to friction.\\ 
The paper studies the new features of the stiction law   in a model of a friction oscillator subject to stiction  \cite{shaw1986a}. This   is a discontinuous system, and one may attempt to study it by using the well-developed theory of Filippov \cite{filippov1988a,dibernardo2008a}. However, it turns out that the model is non-Filippov, and therefore the concept of Filippov solution cannot be used. New concepts of solution of a discontinuous system are introduced, but they lack  forward uniqueness in certain subregions of  the slip onset. Here it is not possible to predict whether the oscillator will  slip or  stick in forward time. To deal with the  non-uniqueness, a regularization  is introduced \cite{sotomayor1996a,kristiansen2014a}: this gives a smooth, singularly perturbed problem, that captures the main features of the original problem.  Singular perturbation methods \cite{jones1995a} can be used to study the regularized system.  The lack of uniqueness turns into a high sensitivity to the initial conditions, where a repelling  slow manifold   separates sticking from slipping solutions. Along this manifold    canard-like trajectories appear.  These canard trajectories are the ones that delay the slip onset. \\
 It is already known that the friction oscillator may exhibit chaotic \cite{licsko2014a,hinrichs1998a} and  periodic behaviour \cite{csernak2006a,olsson2001a,popp1990a}. The manuscript shows, with a numerical computation,  that there exist a family of slip-stick periodic orbits  interacting  with the canard solutions.  This family  connects, at the rigid body limit,  the two branches of slip-stick orbits of the discontinuous problem.  Furthermore the orbits of this family are highly unstable, due to an ``explosion'' of the Floquet multipliers.  \\
The manuscript is structured as follows. 
\Cref{sec:model} presents the model and \cref{sec:PWS_analysis} studies its geometrical structure.  \Cref{sec:solution_definition} introduces a concept of solution that makes sense for the discontinuous model and \cref{sec:regularization} introduces the regularization.   \Cref{sec:slipstickorbits}  shows  slip-stick periodic orbits  interacting with the canard solutions.  Finally \cref{sec:conclusion} concludes the manuscript and discusses the results. 

\section{Model}\label{sec:model}
A friction oscillator consists of a mass $M$  that sits on a rough table, as shown in \cref{fig:model}, and that is subject to a periodic forcing $F_{\omega}(\bar{t}) := -A \sin(\omega \bar{t})$, with $A$   and $\omega$  parameters and $\bar{t}$ time. 
The mass is connected to   a spring of stiffness $\kappa$, that at rest has zero length. Hence the spring elongation $u$ corresponds to the position of $M$.  Besides, the motion of the mass on the rough table  generates a frictional force $F$ that  aims to oppose this movement. 
\begin{figure}[!t]
	\centering
	\includegraphics[width=0.45\textwidth]{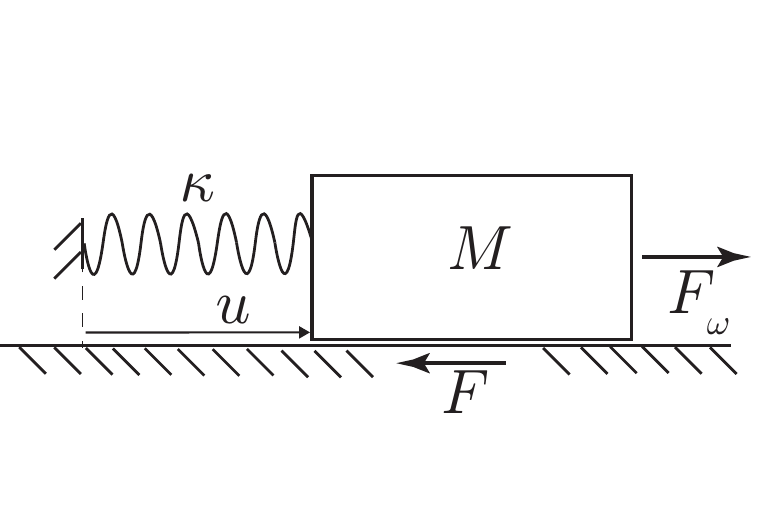}
	\caption{Model of a friction oscillator.}
	\label{fig:model}
\end{figure}
The system of  equations describing the friction oscillator is
\be \label{eq:friction_oscillator}
\begin{aligned}
\dot{u} &= v,\\
M\dot{v} &= -  \kappa u + F_\omega(\bar{t})+ F.
\end{aligned}\ee
The friction force $F$   is modelled as stiction. According to this law, $F$ has different values depending on whether the slip velocity  $v$ is zero or not.  During slip ($v\neq0$) stiction is identical to  the classical Coulomb law: the friction force is constant and acts in the opposite direction of the relative motion,
\be \label{eq:slip_law}
F = -N f_d \sign v \quad \text{when} \quad v\neq 0.
\ee
In equation \cref{eq:slip_law} the parameter $N$ is the normal force, $f_d$ is the dimensionless dynamic friction coefficient, and the sign function is defined as
\[\sign\alpha := \begin{cases} 1& \quad \text{ if } \alpha>0,\\ 
\num{-1}& \quad \text{ if } \alpha<0.\end{cases}\]
\Cref{fig:stiction} illustrates the slipping law \cref{eq:slip_law}.
For zero slip velocity ($v=0$),  it is necessary to consider whether this happens on a whole time interval or only instantaneously, i.e. whether $\dot{v}$ is also zero or not. The former case ($v=\dot{v}=0$) defines the stick phase, and from \cref{eq:friction_oscillator} it follows that
\be \label{eq:stick_law} 
F =  w(\bar{t},u) \quad \text{ when } \quad v=0 \quad \text{and} \quad \lvert w \rvert < Nf_s ,
\ee
where $w(\bar{t},u) := \kappa u - F_\omega(\bar{t})$ is  the sum of forces that induce the motion of $M$. The parameter $f_s$  in \cref{eq:stick_law} is the dimensionless static friction coefficient and $f_s>f_d>0$ \cite{rabinowicz1951a}. The idea is that the value of the static friction is exactly the one that counteracts the other forces acting on $M$, so that the mass will keep on sticking. However the static friction \cref{eq:stick_law} can only oppose the motion of $M$  up to the maximum static friction $\pm N f_s$, thus  
  \[ \label{eq:stick_instantaneous}
F =  Nf_s \sign{w} \quad \text{ when } \quad v=0 \quad \text{and} \quad \lvert w \rvert > Nf_s.
\]
In this latter case the friction force is not sufficient to maintain $\dot{v}=0$ and therefore the mass will slip in forward time. \Cref{fig:stiction_y0} illustrates the friction law for $v=0$. In compact form, stiction is written as: 
\[
F(v,w) = \begin{cases}
- N f_d \sign{v} & \quad v \neq 0,\\
 w&   \quad  v=0 \text{ and } \lvert w  \rvert < Nf_s ,\\
  Nf_s \sign{w} & \quad  v=0  \text{ and } \lvert w  \rvert > Nf_s.
\end{cases}\]
The friction law is not defined for  $v=0$ and $\lvert w\rvert = N f_s$, where the external forces equal the maximum static friction during stick. Other modelling choices  may fix a value of $F$ in these points. These choices do  not affect the results of the following analysis, see \cref{sec:solution_definition}.
\begin{figure}[t!]
	\centering
	\begin{subfigure}{.48\textwidth}\caption{ }\label{fig:stiction}
  \centering
\includegraphics[scale=1.1]{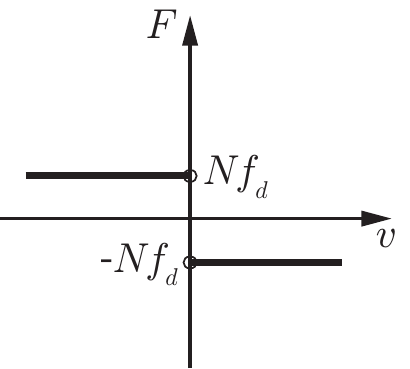}\end{subfigure}
\begin{subfigure}{.48\textwidth}\caption{ }\label{fig:stiction_y0}
  \centering
 \includegraphics{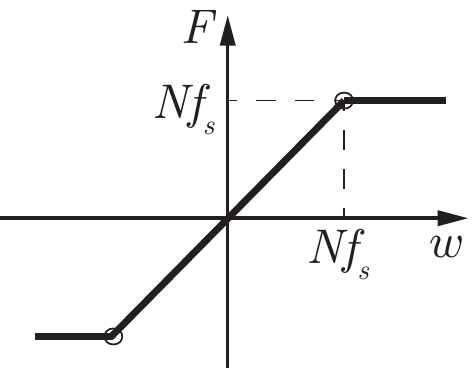}\end{subfigure}
  \caption{Stiction friction $F(v,w)$.  \subref{fig:stiction}: $v\neq 0$.   \subref{fig:stiction_y0}: $v=0$.  }\label{fig:stiction_law}\end{figure}
By  rescaling
\[
u = \frac{V}{\omega} x, \quad v = Vy, \quad \bar{t} =\frac{t}{\omega}, 
\]
system \cref{eq:friction_oscillator} is rewritten in its dimensionless form:
\be \label{eq:model_dimensionless}
\begin{aligned}
x' &= y,\\
y' &= -\xi(x,\theta) + \mu(y,\xi(x,\theta)),\\
\theta' &= 1,
\end{aligned}\ee
where $\theta \in \mathbb{T}^1$ is a new variable describing the phase of the periodic forcing, and that makes system \cref{eq:model_dimensionless} autonomous. Furthermore 
\[
\xi(x,\theta) := \frac{w}{A} = \gamma^2 x + \sin \theta, \]
is the sum of the rescaled external forces, and it is often referred to as $\xi$ in the following analysis.
In this new system the prime has the meaning of differentiation with respect to the time $t$, and $\gamma:= \Omega/\omega$ is the ratio between the natural frequency of the spring $\Omega := \sqrt{\kappa/M}$ and the forcing frequency $\omega$. Therefore $\gamma\to\infty$ corresponds to the rigid body limit. The function $\mu$ describes the dimensionless stiction law:
\be \label{eq:stiction_dimensionless}
\mu(y,\xi) = \begin{cases}
- \mu_d \sign{y} & \quad y \neq 0,\\
 \xi&   \quad  y=0 \text{ and } \lvert \xi  \rvert < \mu_s ,\\
  \mu_s \sign{\xi} & \quad  y=0  \text{ and } \lvert \xi  \rvert > \mu_s,
\end{cases}
\ee
where $\mu_{d,s} := Nf_{d,s} / A$.
System \cref{eq:model_dimensionless} together with the friction function \cref{eq:stiction_dimensionless} is the model used in the  rest of the  analysis. In  compact form it is written as $z' = Z(z)$, where $z := (x,y,\theta) \in \mathbb{R}^2 \times \mathbb{T}^1$, and $\mathbb{T}^1 := \mathbb{R} / 2\pi\mathbb{Z}$. The vector field   $Z(z)$  is not defined on the two lines $\{  y=0, \xi=\pm\mu_s\}$.  \Cref{sec:PWS_analysis} studies the phase space of \cref{eq:model_dimensionless} using geometrical tools from piecewise-smooth theory \cite{dibernardo2008a,filippov1988a}.  


\section{Geometric analysis of the discontinuous system}\label{sec:PWS_analysis}
This section analyses  the friction oscillator \cref{eq:model_dimensionless} with stiction friction \cref{eq:stiction_dimensionless} in the context of piecewise-smooth dynamical systems. The notation is consistent with the  one in \cite{guardia2010a}.
System \cref{eq:model_dimensionless}  is smooth in the two regions
\[
\begin{aligned}
G^+ &:= \{ (x,y,\theta) \in \mathbb{R}^2 \times \mathbb{T}^1 \mid y>0\},\\
G^- &:= \{ (x,y,\theta) \in \mathbb{R}^2 \times \mathbb{T}^1 \mid y<0\}.\\
\end{aligned}
\]
Let $Z^+(z)$ ($Z^-(z)$) be the vector field $Z(z)$   restricted to ${G}^+ $ (${G}^- $) and extended to the closure of ${G}^+ $ (${G}^- $). These two smooth vector fields have the explicit form
\[ Z^\pm =\begin{cases}
x' &= y,\\
y' &= -\xi(x,\theta) \mp \mu_d,\\
\theta' &= 1.\end{cases}\]
The set $\Sigma := \{ (x,y,\theta) \in \mathbb{R}^2 \times \mathbb{T}^1 \mid y=0\}
$ is a surface of discontinuity of $Z(z)$  and it is called the {\it switching manifold}. The vector field $Z(z)$ is well-defined  in $\Sigma \setminus \{ \xi = \pm\mu_s\}$ and its dynamics on the $y$-coordinate is
\[ y' = -\xi(x,\theta)  + \mu\left(0,\xi(x,\theta)\right) \,\, \begin{cases} 
>0 &\quad \text{for} \quad \xi<-\mu_s,\\
=0 & \quad \text{for} \quad \lvert\xi\rvert<\mu_s,\\
<0&\quad \text{for} \quad \xi>\mu_s.\end{cases}\]
Therefore it is natural to subdivide $\Sigma$ into the   three sets  
\[\begin{aligned}
\Sigma_c^+ &:= \{ (x,y,\theta) \in \mathbb{R}^2 \times \mathbb{T}^1 \mid y=0 \, \text{ and } \, \xi < -\mu_s \},\\
\Sigma_s &:= \{ (x,y,\theta) \in \mathbb{R}^2 \times \mathbb{T}^1 \mid y=0 \, \text{ and } \,  -\mu_s < \xi < \mu_s \},\\
\Sigma_c^- &:= \{ (x,y,\theta) \in \mathbb{R}^2 \times \mathbb{T}^1 \mid y=0 \, \text{ and } \, \xi > \mu_s\},
\end{aligned}\]
that are shown in \cref{fig:phasespace_tangencies}. The set $\Sigma_c^+$ ($\Sigma_c^-$) is called   the {\it crossing region  pointing \it upwards} ({\it  downwards}), because orbits here switch from   $G^-$   to $G^+$ (from $G^+$ to $G^-$). The strip  $\Sigma_s$  is called the {\it sticking region} because trajectories within it are not allowed to switch to $G^\pm$, and they correspond to solutions where the mass sticks to the table.
\begin{figure}[t!]
\centering
\begin{subfigure}{0.9\textwidth}\caption{ }\label{fig:phasespace_tangencies}  \centering
\includegraphics[width=\textwidth]{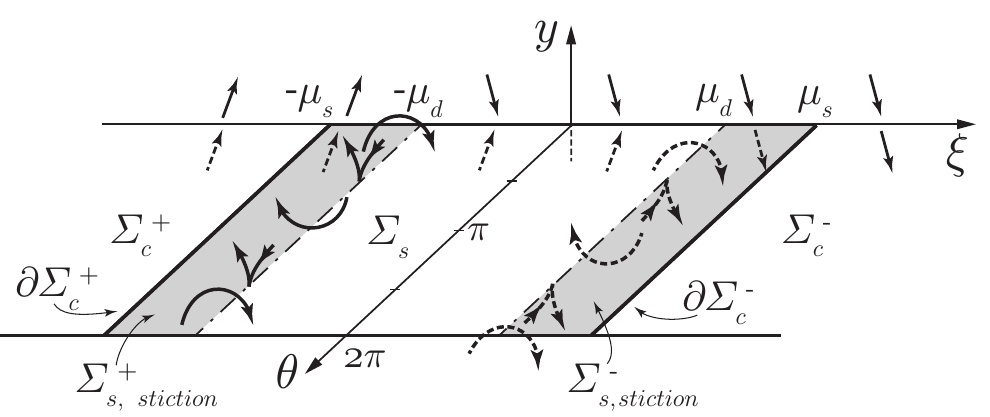}\end{subfigure}
\begin{subfigure}{.9\textwidth}\caption{ }\label{fig:sticking_vectorfield}\centering
\includegraphics[width=0.8\textwidth]{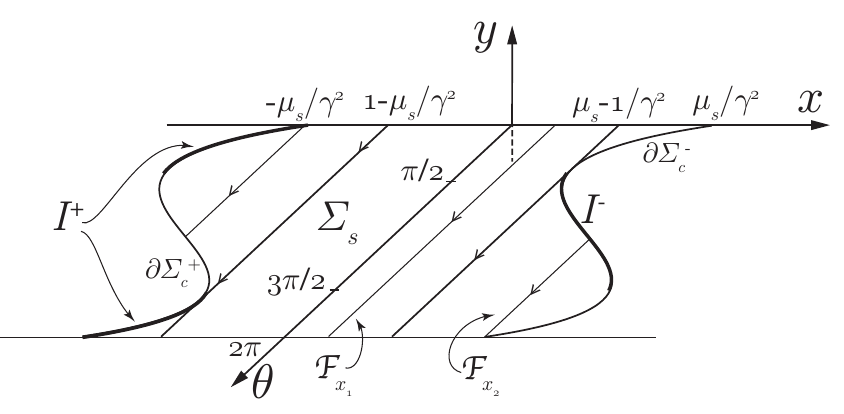}\end{subfigure}
\caption{
 \subref{fig:phasespace_tangencies}: Vector fields $Z^\pm$ and their tangencies at $\xi = \mp \mu_d$ in the $(\xi,y,\theta)$-space.  $Z^-$ is dashed because it is below $\Sigma_s$. The  grey bands indicate   where $Z^\pm$ suggest crossing but instead the solution for $y=0$ is sticking. \subref{fig:sticking_vectorfield}: Phase space of $Z_s$ in the $(x,y,\theta)$-space with the tangencies at $\theta = \{\pi/2, 3\pi/2\}$.  The leaf $\mathcal{F}_{x_1}$ is a full circle, while $\mathcal{F}_{x_2}$ is an arc of a circle. The intervals of non-uniqueness $I^\pm$ are introduced in \cref{prop:existence_uniqueness}.}\label{fig:phasespace_PWS}
\end{figure}  
Let $Z_s(z)$ be the smooth vector field  $Z(z)$  restricted to ${\Sigma}_s$ and extended to the closure of ${\Sigma}_s$. This  two-dimensional vector field  has the explicit form $(x,\theta)'=(0,1)$,  thus $\Sigma_s$ is foliated by invariant arcs of circles
\be\label{eq:foliation} \mathcal{F}_{x_0} := \{ (x,y,\theta) \in \Sigma_s \mid x = x_0 \}, \ee
since $\theta\in \mathbb{T}^1$. \Cref{fig:sticking_vectorfield} shows the foliation $\mathcal{F}_{x_0}$.  The boundaries of $\Sigma_s$ with $\Sigma_c^\pm$ define the two sets  
\[
\begin{aligned}
\partial\Sigma_c^+ &:= \{ (x,y,\theta) \in \mathbb{R}^2 \times \mathbb{T}^1 \mid y=0 \, \text{ and } \, \xi = -\mu_s \},\\
\partial\Sigma_c^- &:= \{ (x,y,\theta) \in \mathbb{R}^2 \times \mathbb{T}^1 \mid y=0 \, \text{ and } \, \xi = \mu_s\}.
\end{aligned}
\]
The vector field  $Z(z)$ is not defined on  $\partial\Sigma_c^\pm$, but the  three vector fields   $Z_s(z)$ and $Z^\pm(z)$ are.  Indeed   $\partial\Sigma_c^\pm$ belong to the closure of both $\Sigma_s$ and $G^\pm$.   
Hence on $\partial\Sigma_c^\pm$ solutions may be forward non-unique. This will be discussed in \cref{sec:solution_definition}. \\
The following two \cref{prop:tangency_sigmas,prop:tangency_sigmacpm} say where the vector fields $Z_s(z)$, $Z^\pm(z)$ are tangent to $\partial \Sigma_c^\pm$ and $\Sigma$ respectively. The results are shown in \cref{fig:phasespace_PWS}. First, a definition introduces the concepts of visible and invisible tangency.
\begin{definition}
Let   $\hat{\Sigma}:= \{z \in \mathbb{R}^n \mid \chi(z) >0 \}$,  where  $\chi : \mathbb{R}^n\to \mathbb{R}$ 
is a smooth and regular function such that $\nabla \chi(z) \neq 0$ for every $z\in \mathbb{R}^n$.
Furthermore   let $\hat{Z} : \hat{\Sigma}\to\mathbb{R}^n$ be a smooth  vector field, having a smooth extension to the boundary of $\hat{\Sigma}$, that is for $\chi(z)=0$.  In addition, let $\mathcal{L}_{\hat{Z}}\chi(z):= \nabla \chi \cdot \hat{Z}(z)$  denote the Lie derivative    of $\chi$ with respect to  $\hat{Z}(z)$.\\
The vector field $\hat{Z}(z)$ is  \textnormal{tangent} to the set $\chi(z)=0$ at $p\in\hat{\Sigma}$ if  $\mathcal{L}_{\hat{Z}}\chi(p)=0$. The tangency is called \textnormal{visible} (\textnormal{invisible}) if  $\mathcal{L}_{\hat{Z}}^2\chi(p)>0$ ($\mathcal{L}_{\hat{Z}}^2\chi(p)<0$), where $\mathcal{L}_{\hat{Z}}^2\chi(p)$ is the second order Lie derivative.  The tangency is a \textnormal{cusp} if $\mathcal{L}_{\hat{Z}}^2\chi(p)=0$ but $\mathcal{L}_{\hat{Z}}^3\chi(p)\neq0$.  
\end{definition}
 In other words, the tangency is visible if the orbit $z' = \hat{Z}(z)$ starting at $p$ stays in $\hat{\Sigma}$ for all sufficiently small $\lvert t \rvert >0$, and it is invisible if it never does so  \cite[p.~93 and  p.~237]{dibernardo2008a}.  A quadratic tangency is also called a {\it fold}  \cite{teixeira1993a}.  
 \begin{proposition}\label{prop:tangency_sigmas}
$Z_s(z)$ is tangent to $\partial \Sigma_c^-$ ($\partial \Sigma_c^+$) in the isolated points  $\theta \in\{\pi/2, 3\pi/2\}$.  The tangency is visible (invisible) for $\theta =\pi/2$, and invisible (visible) for $\theta = 3\pi/2$. 
\end{proposition}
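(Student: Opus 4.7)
The plan is to pick defining functions for $\partial \Sigma_c^\pm$ on $\Sigma_s$ and directly compute the Lie derivatives of these functions along $Z_s$. Since on $\Sigma_s$ the vector field has the explicit form $(x',y',\theta')=(0,0,1)$, the calculation reduces to differentiating trigonometric expressions in $\theta$.

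Concretely, I would set
\[
\chi^-(x,\theta) := \mu_s - \xi(x,\theta) = \mu_s - \gamma^2 x - \sin\theta,
\]
so that $\chi^-(z)>0$ on $\Sigma_s$ and $\chi^-(z)=0$ on $\partial\Sigma_c^-$; and symmetrically
\[
\chi^+(x,\theta) := \mu_s + \xi(x,\theta) = \mu_s + \gamma^2 x + \sin\theta,
\]
so that $\chi^+(z)>0$ on $\Sigma_s$ and $\chi^+(z)=0$ on $\partial\Sigma_c^+$. Both $\chi^\pm$ are regular. Applying $Z_s = (0,0,1)$ gives
\[
\mathcal{L}_{Z_s}\chi^- = -\cos\theta, \qquad \mathcal{L}_{Z_s}\chi^+ = \cos\theta,
\]
which vanish precisely at the isolated points $\theta\in\{\pi/2,3\pi/2\}$ on $\mathbb{T}^1$, and independently of $x$. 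This establishes the first assertion.

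To classify the tangencies, I would compute the second-order Lie derivatives
\[
\mathcal{L}_{Z_s}^2\chi^- = \sin\theta, \qquad \mathcal{L}_{Z_s}^2\chi^+ = -\sin\theta,
\]
and evaluate at the two tangency points. At $\theta=\pi/2$ we get $\mathcal{L}_{Z_s}^2\chi^-=1>0$ and $\mathcal{L}_{Z_s}^2\chi^+=-1<0$, so the tangency with $\partial\Sigma_c^-$ is visible and with $\partial\Sigma_c^+$ invisible. At $\theta=3\pi/2$ the signs reverse, giving invisible tangency with $\partial\Sigma_c^-$ and visible tangency with $\partial\Sigma_c^+$. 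Both tangencies are quadratic folds since the second Lie derivative is nonzero in all four cases, which completes the proof.

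There is no real obstacle in this argument; the only point that requires a brief mention is that $\chi^\pm$ are valid defining functions on a neighbourhood of $\partial\Sigma_c^\pm$ inside the two-dimensional manifold $\Sigma_s$ (so the Lie derivatives are to be taken within $\Sigma_s$, consistent with the definition applied to the restricted vector field $Z_s$), and that the zero set of $\cos\theta$ on $\mathbb{T}^1$ consists of exactly the two isolated points claimed.
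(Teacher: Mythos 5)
Your proof is correct and takes essentially the same route as the paper: you use the defining function $\chi^- = \mu_s - \xi$ (identical to the paper's $\chi$), compute $\mathcal{L}_{Z_s}\chi^- = -\cos\theta$ and $\mathcal{L}^2_{Z_s}\chi^- = \sin\theta$, and classify the folds by the sign of the second Lie derivative; the paper does the $\partial\Sigma_c^-$ case explicitly and says ``similar computations'' for $\partial\Sigma_c^+$, whereas you spell out both.
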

\begin{proof}
Define the function $\chi(\xi,\theta) = \mu_s-\xi(x,\theta)$ so that it is defined within  $\Sigma$, and its zeroes belong to $\partial\Sigma_c^-$. Then $\mathcal{L}_{Z_s}\chi(p)=0$ in $\theta = \{\pi/2, 3\pi/2\}$. Moreover $\mathcal{L}^2_{Z_s}\chi(p) = \sin\theta $. Hence $\theta = \pi/2$ ($\theta = 3\pi/2$) is a visible (invisible) fold.   Similar computations prove the result for $\partial \Sigma_c^+$. 
\end{proof}
\begin{corollary}\label{cor:sticking_pos}
If $\mu_s>1$, then  the invariant leaves $\mathcal{F}_x$ of \cref{eq:foliation} with $\lvert \gamma^2 x  \rvert < \mu_s-1 $ are  periodic  with period $2\pi$. The remaining leaves   of \cref{eq:foliation}, having $\lvert \gamma^2 x  \rvert \geq \mu_s-1 $, escape   $\Sigma_s$ in finite time. If $\mu_s <1$ no periodic solutions exist on $\Sigma_s$. 
\end{corollary}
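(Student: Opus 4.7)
The plan is to reduce the statement to an elementary calculation exploiting the explicit form $\xi(x,\theta) = \gamma^2 x + \sin\theta$ together with the fact that along any leaf $\mathcal{F}_{x_0}$ the sticking vector field $Z_s$ has the form $(x,\theta)' = (0,1)$, so $x \equiv x_0$ and $\theta$ advances uniformly on $\mathbb{T}^1$.

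First I would rewrite the defining condition $|\xi(x_0,\theta)| < \mu_s$ of $\Sigma_s$ along a leaf as the inclusion $\sin\theta \in (-\mu_s - \gamma^2 x_0,\, \mu_s - \gamma^2 x_0)$. Since $\sin\theta$ attains both values $\pm 1$ as $\theta$ ranges over $\mathbb{T}^1$, the leaf $\mathcal{F}_{x_0}$ lies in $\Sigma_s$ for \emph{every} $\theta$ iff $|\gamma^2 x_0| + 1 < \mu_s$, equivalently $|\gamma^2 x_0| < \mu_s - 1$. In that case, because $\theta' = 1$ on $\mathbb{T}^1 = \mathbb{R}/2\pi\mathbb{Z}$, the leaf traces out a full circle with period exactly $2\pi$. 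The set of such $x_0$ is non-empty precisely when $\mu_s > 1$, which proves the first claim.

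Next, for $\mu_s > 1$ and $|\gamma^2 x_0| \geq \mu_s - 1$ (restricting to the range where $\mathcal{F}_{x_0}$ is non-empty), there is some $\theta^{\star} \in \mathbb{T}^1$ with $\sin\theta^{\star} = \mu_s - \gamma^2 x_0$ or $\sin\theta^{\star} = -\mu_s - \gamma^2 x_0$, so the closure of the leaf meets $\partial \Sigma_c^{\mp}$. The uniform advance $\theta(t) = \theta(0) + t$ then forces the trajectory to reach $\theta^{\star}$ within time at most $2\pi$, so it exits $\Sigma_s$ in finite time. For the final assertion, if $\mu_s < 1$ then the inequality $|\gamma^2 x_0| < \mu_s - 1$ has no solution, so by the previous analysis no leaf is a full $2\pi$-circle and $\Sigma_s$ carries no periodic orbit of $Z_s$.

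I do not expect any significant obstacle: the argument is a direct computation based on the sinusoidal form of $\xi$ and the linear evolution of $\theta$. The only subtle point is the borderline case $|\gamma^2 x_0| = \mu_s - 1$, where the leaf tangentially grazes $\partial\Sigma_c^{\pm}$ at $\theta = \pi/2$ or $\theta = 3\pi/2$; this is consistent with the visible/invisible folds identified in \cref{prop:tangency_sigmas}, and it still qualifies as escape because $\Sigma_s$ is defined by the strict inequalities $-\mu_s < \xi < \mu_s$, so the grazing point lies outside $\Sigma_s$ and the leaf fails to close up as a subset of $\Sigma_s$.
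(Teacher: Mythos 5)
Your argument is correct and is essentially the same as the paper's: both rest on the explicit form $\xi(x_0,\theta)=\gamma^2 x_0+\sin\theta$, the constancy of $x$ along a leaf, and the uniform advance $\theta'=1$, leading to the bound $|\gamma^2 x_0|<\mu_s-1$. The paper phrases the threshold geometrically via the tangent orbits $\gamma^2 x=\pm(\mu_s-1)$ identified in \cref{prop:tangency_sigmas}, whereas you obtain it by direct computation from $\sup_\theta|\xi(x_0,\theta)|=|\gamma^2x_0|+1$; this is a purely stylistic difference. Your explicit treatment of the borderline case $|\gamma^2x_0|=\mu_s-1$ (the grazing leaf is excluded from $\Sigma_s$ because the defining inequalities are strict) is a small but welcome clarification that the paper leaves implicit.
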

\begin{proof} The sticking trajectory $\gamma^2 x(t) =\mu_s-1$ ($\gamma^2 x(t) =-\mu_s+1$) is tangent to $\partial \Sigma_c^-$ ($\partial \Sigma_c^+$) because $\xi(x,\pi/2)=\mu_s$ ($\xi(x,3\pi/2)=-\mu_s$).   These two lines coincide for $\mu_s=1$. When $\mu_s>1$ the orbits $\lvert \gamma^2 x(t) \rvert < \mu_s-1 $ are included within the two tangent orbits. Hence they never intersect the boundaries $\partial \Sigma_c^\pm$  and therefore are periodic with period $2\pi$. Instead, the trajectories $\mu_s>\lvert \gamma^2 x(t) \rvert \geq\mu_s-1$ exit $\Sigma_s$ in finite time.  \end{proof}
The orbit $\mathcal{F}_{x_1}\subset \Sigma_s$ of \cref{fig:sticking_vectorfield} is periodic, while $\mathcal{F}_{x_2}$ leaves $\Sigma_s$ in finite time. The  period ${ T} = 2\pi$ corresponds  to a period $\bar{T} = 2\pi/\omega$ in the original time $\bar{t}$,  as it is often mentioned in the literature \cite{csernak2006a ,shaw1986a}.   The condition $\mu_s>1$     corresponds to $N f_s > A$ that is, the maximum static friction force is larger than the amplitude of the  forcing $F_\omega$. This interpretation makes it an obvious condition for having sticking solutions.
 \begin{proposition}\label{prop:tangency_sigmacpm}
The vector field $Z^-$  ($Z^+$)  is tangent to $\Sigma$ on  the line $\xi = \mu_d$ ($\xi =- \mu_d$). The tangency is   invisible  (visible)   for   $\theta \in ]\pi/2, 3\pi/2[$, it is visible (invisible) for      $\theta\in[0,\pi/2[$ and $\theta\in]3\pi/2,2\pi[$, while it is a cusp on the isolated points  $\theta = \{\pi/2, 3\pi/2\}$.
\end{proposition}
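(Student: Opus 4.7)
The plan is to verify the claim by direct computation of successive Lie derivatives, exactly mirroring the method used in the proof of \cref{prop:tangency_sigmas}. Because $\Sigma=\{y=0\}$ lies in the closures of both $G^+$ and $G^-$, I will treat each vector field separately. For $Z^+$ I will take the defining function $\chi^+(z)=y$ (so that $\chi^+>0$ in $G^+$), and for $Z^-$ I will take $\chi^-(z)=-y$ (so that $\chi^->0$ in $G^-$). Both functions are smooth and regular, so the definition of (in)visible tangency applies.

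First I would compute $\mathcal{L}_{Z^+}\chi^+=y'=-\xi-\mu_d$ and $\mathcal{L}_{Z^-}\chi^-=-y'=\xi-\mu_d$. Setting each to zero and using $y=0$ on $\Sigma$ immediately pins the tangency loci to the two straight lines $\xi=-\mu_d$ (for $Z^+$) and $\xi=\mu_d$ (for $Z^-$), as asserted.

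Next I would compute the second Lie derivatives as functions on phase space: differentiating $-\xi-\mu_d$ along $Z^+$ gives $\mathcal{L}_{Z^+}^2\chi^+=-(\gamma^2 y+\cos\theta)$, and differentiating $\xi-\mu_d$ along $Z^-$ gives $\mathcal{L}_{Z^-}^2\chi^-=\gamma^2 y+\cos\theta$. Restricted to the respective tangency lines these equal $-\cos\theta$ and $\cos\theta$. Reading off signs then yields the claimed pattern: on $\theta\in ]\pi/2,3\pi/2[$ the sign is positive for $Z^+$ (visible) and negative for $Z^-$ (invisible), whereas on $\theta\in[0,\pi/2[\,\cup\,]3\pi/2,2\pi[$ the signs flip. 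The two values $\theta=\pi/2$ and $\theta=3\pi/2$ are precisely the zeros of $\cos\theta$, so the second Lie derivative vanishes there and the tangency is degenerate.

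Finally, to confirm the cusp claim at $\theta\in\{\pi/2,3\pi/2\}$, I would compute the third Lie derivative. Differentiating $\gamma^2 y+\cos\theta$ along $Z^-$ produces $\gamma^2(-\xi+\mu_d)-\sin\theta$, which on the tangency line $\xi=\mu_d$ reduces to $-\sin\theta$; at $\theta=\pi/2$ this is $-1$ and at $\theta=3\pi/2$ it is $+1$, hence nonzero. An identical computation for $Z^+$ (with an overall sign) yields $\sin\theta$, again nonzero at the same two phases. This establishes the cusp condition $\mathcal{L}^2_Z\chi=0$, $\mathcal{L}^3_Z\chi\neq 0$, completing the proof. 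The only substantive care in the argument is bookkeeping: choosing the correct defining function $\chi$ for each half-space so that visibility corresponds to $\mathcal{L}^2_Z\chi>0$, and remembering that $\gamma^2 y$ drops out on $\Sigma$ when evaluating signs at the tangency.
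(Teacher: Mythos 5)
Your proposal is correct and follows essentially the same route as the paper: choose the defining function $\chi=-y$ (resp. $y$) for $Z^-$ (resp. $Z^+$), compute successive Lie derivatives on $\Sigma$, find the first vanishes on $\xi=\mu_d$ (resp. $\xi=-\mu_d$), the second reduces to $\pm\cos\theta$, and the third to $\mp\sin\theta$ at the degenerate phases. The only cosmetic difference is that the paper carries out the computation for $Z^-$ and invokes ``similar computations'' for $Z^+$, whereas you write both out explicitly.
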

\begin{proof}
Define the function $\chi(x,y,\theta)=-y$  so that it is defined in $G^-$ and it is zero in $\Sigma$. Then $\mathcal{L}_{Z^-}\chi(p)  = \xi(x,\theta) -  \mu_d = 0$   on the line $\xi = \mu_d, \,\theta\in\mathbb{T}^1$.   Moreover $\mathcal{L}_{Z^-}^2\chi(p)  = \cos \theta$.  This is negative for $\theta \in ]\pi/2, 3\pi/2[$ and positive for $\theta\in[0,\pi/2[$ and $\theta\in]3\pi/2,2\pi[$. The points $\theta = \pi/2$ and $\theta= 3\pi/2$ have $\mathcal{L}_{Z^-}^2\chi(p) =0$ but $ \mathcal{L}_{Z^-}^3\sigma(p)\neq0$.  Similar computations prove the result  for  $Z^+(z)$. 
\end{proof}
The knowledge of the tangencies is sufficient to describe the local phase space of system \cref{eq:model_dimensionless} around the discontinuity $\Sigma$, as \cref{fig:phasespace_PWS} shows. 
\Cref{sec:solution_definition} discusses how forward solutions of $Z(z)$, that are smooth within each set $G^\pm$ and $\Sigma_s$, connect at the boundaries of these regions.  It is futile to study solutions in backwards time, because when an orbit lands on $\Sigma_s$, the information of when it has landed is lost.
\section{Forward solutions of the discontinuous system}\label{sec:solution_definition}
Classical results on existence and uniqueness of solutions require Lipschitz continuous right hand sides, and therefore do not apply to discontinuous systems like  \cref{eq:model_dimensionless}. 
A class of discontinuous systems for which some results 
are known,  is the one of Filippov-type \cite{filippov1988a}.  In a Filippov-type system the vector fields $Z^\pm(z)$ are sufficient to describe the dynamics within the switching manifold $\Sigma$. This is useful especially when there is no vector field already defined on $\Sigma$.  Let $Z_y^\pm(z)$ be the $y$ component of $Z^\pm(z)$ in a point $z\in\Sigma$. Then  Filippov's convex method defines the {\it crossing region} as the subset of $\Sigma$ where $Z_y^+\cdot Z_y^-(z)>0$, while the {\it sliding region} $\Sigma_{s,\textnormal{Filippov}}$ satisfies $Z_y^+\cdot Z_y^-(z)<0$ \cite[\S~2]{filippov1988a}, \cite[p.~76]{dibernardo2008a}. The idea is that solutions inside the sliding region  cannot exit $\Sigma$ because  $Z^\pm(z)$ do not allow it.
\begin{remark}\label{rem:non_Filippov}
System  \cref{eq:model_dimensionless} together with the friction law \cref{eq:stiction_dimensionless} is not of Filippov-type. 
Indeed the  {sliding} region of system \cref{eq:model_dimensionless}  is
\[ \Sigma_{s,\textnormal{Filippov}} := \{ (x,y,\theta) \in \mathbb{R}^2 \times \mathbb{T}^1 \mid y=0 \, \text{ and } \, -\mu_d < \xi < \mu_d \} ,\]
 that is a strip within $\Sigma_s$ whenever $\mu_d<\mu_s$. In the two remaining bands
 \[ \begin{aligned}
  \Sigma_{s,\textnormal{stiction}}^- &:= \{ (x,y,\theta) \in \mathbb{R}^2 \times \mathbb{T}^1 \mid y=0 \, \text{ and } \, \xi \in ]\mu_d,\mu_s[ \}  ,\\
  \Sigma_{s,\textnormal{stiction}}^+ &:= \{ (x,y,\theta) \in \mathbb{R}^2 \times \mathbb{T}^1 \mid y=0 \, \text{ and } \, \xi \in ]-\mu_s,-\mu_d[ \} ,
  \end{aligned}\]   that are coloured in grey in \cref{fig:phasespace_tangencies}, the vector field $Z_s(z)$  does not belong to the convex closure of  $Z^\pm(z)$. Here Filippov's method predicts orbits to switch   from $G^+$ to $G^-$ or vice versa, but the actual solution of model \cref{eq:model_dimensionless}  lies within $\Sigma_s$. 
 When $\mu_d= \mu_s$ the friction law \cref{eq:stiction_dimensionless}  equals the classical Coulomb friction and $\Sigma_s$ coincides with $\Sigma_{s,\textnormal{Filippov}}$. This case has been studied in \cite{guardia2010a,kowalczyk2008a,csernak2007a}. 
 \end{remark}
The two grey bands $ \Sigma_{s,\textnormal{stiction}}^\pm$  are unstable to perturbations in  $y$. Consider for instance a trajectory in $\Sigma_{s,\textnormal{stiction}}^-$ that is pushed to $G^-$ by an arbitrary small perturbation: this solution will evolve far   from $ \Sigma_{s,\textnormal{stiction}}^-$ by following $Z^-(z)$. \\
Another notion of forward solution of a discontinuous system  is the  {\it Carath\'eodory solution}  \cite{cortes2008a}, \cite[\S 1]{filippov1988a}.  This is an absolutely continuous function $z(t)$ that satisfies 
\be \label{eq:caratheodory} 
z(t) = z(0) + \int_{0 }^t Z(z(s))\,ds, \quad t\geq 0,\ee
where the integral is in a Lesbegue sense. Hence in order to have a Carath\'eodory solution, $Z(z)$ needs only to be defined almost everywhere. 
\begin{proposition}
For every $z_0\!=\! z(0)  \in \mathbb{R}^2\times\mathbb{T}^1$ there exists a global forward Carath\'eodory solution of model \cref{eq:model_dimensionless} satisfying \cref{eq:caratheodory} for every $t\geq0$. 
\end{proposition}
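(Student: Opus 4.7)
My plan is to construct a forward Carathéodory solution by concatenating smooth pieces generated by the three smooth vector fields $Z^+$, $Z^-$ and $Z_s$, each of which extends smoothly to the closure of its domain. The construction proceeds inductively on the transition times between the regions $G^+$, $G^-$ and $\Sigma_s$.

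Given any $z_0\in G^+\cup G^-\cup\Sigma_s$, classical Picard--Lindelöf applied to the relevant smooth extension yields a unique $C^1$ solution on a maximal forward interval $[0,t^\ast)$ on which the trajectory stays in the open region. If $t^\ast=+\infty$ nothing further is needed; otherwise the trajectory reaches the closure: either $\overline{\Sigma}$ (when starting in $G^\pm$) or $\partial\Sigma_c^\pm$ (when starting in $\Sigma_s$, by \cref{cor:sticking_pos}). At the landing point I would apply a continuation rule dictated by the geometry of \cref{sec:PWS_analysis}: at points of $\Sigma_c^\pm$ continue transversally into $G^\mp$ with $Z^\mp$; at points of $\Sigma_s$ continue with $Z_s$ along the leaf $\mathcal{F}_x$ of \cref{eq:foliation}; and at the two exceptional lines $\partial\Sigma_c^\pm$ where $Z$ is not defined, I simply make an arbitrary choice (for existence it suffices to continue with, say, $Z_s$). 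Points of $\partial\Sigma_c^\pm$ are visited on a set of times of Lebesgue measure zero, so the values of $Z$ there are irrelevant to the integral in \cref{eq:caratheodory}. Iterating yields a solution on the maximal interval $[0,T_{\max})$ on which the concatenation is well defined.

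To obtain $T_{\max}=+\infty$ I would argue in two steps. Absence of finite-time blow-up follows from the fact that $|\mu|\leq\mu_s$ is bounded and the remaining part of $Z$ is affine in $z$, giving a linear growth bound $\|Z(z)\|\leq C(1+\|z\|)$ and a Gronwall estimate on every bounded interval. Absence of a Zeno-type accumulation of transition times should follow because each entry into $G^\pm$ through $\Sigma_c^\pm$ is transversal with $y'$ bounded away from zero locally, so each excursion in $G^\pm$ has a positive minimal duration; and by \cref{cor:sticking_pos} each excursion in $\Sigma_s$ either lasts forever or traverses a positive arc of a leaf. The resulting $z(t)$ is continuous by construction and piecewise $C^1$, hence absolutely continuous, and its classical derivative agrees with $Z(z(t))$ at every $t$ outside a countable set of transition times together with the null set $\{t:z(t)\in\partial\Sigma_c^\pm\}$; this is enough for \cref{eq:caratheodory}.

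The step I expect to be the main obstacle is the exclusion of Zeno behaviour near the tangency lines identified in \cref{prop:tangency_sigmacpm}, where an orbit in $G^\pm$ only grazes $\Sigma$ quadratically and could a priori return very quickly. A normal-form analysis around a visible fold should produce a uniform lower bound on the return time in a neighbourhood of the fold, which suffices to rule out accumulation on bounded intervals. A secondary delicate point is the bookkeeping at $\partial\Sigma_c^\pm$: one must emphasise that the forward non-uniqueness flagged in \cref{sec:PWS_analysis} does not obstruct \emph{existence}, since any consistent choice of continuation rule produces an absolutely continuous curve that satisfies the integral identity.
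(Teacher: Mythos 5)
Your proposal follows the same route as the paper's own proof: build the global forward solution by concatenating local Picard--Lindel\"{o}f solutions of the smooth pieces $Z^+$, $Z^-$ and $Z_s$, and rule out finite-time blow-up using the linear (affine) structure of the right-hand side via a Gronwall estimate. The paper states this in three lines without addressing either the Zeno issue or the bookkeeping at $\partial\Sigma_c^\pm$; your treatment of those two points is additional care rather than a different method, and it is sound---crossings of $\Sigma_c^\pm$ occur with $\lvert y'\rvert\geq\mu_s-\mu_d>0$ bounded away from zero, stick excursions are controlled by \cref{cor:sticking_pos}, and $\partial\Sigma_c^\pm$ is a codimension-two set on which $\theta'=1$ forces instantaneous exit, so it is indeed visited on a Lebesgue-null set of times.
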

\begin{proof}
For every $z_0$ there exists at least one local classical solution of either $Z^\pm(z)$ or $Z_s(z)$. A forward solution of \cref{eq:caratheodory} is obtained  by piecing together such local orbits together on $\Sigma$. This can be done for every $t>0$ since $Z^\pm(z)$ and $Z_s(z)$ are each linear in $(x,y)$, excluding the possibility of blowup in finite time. 
\end{proof}
  \begin{figure}[t!]
\centering
\begin{subfigure}{.48\textwidth}\caption{ }\label{fig:artifacts_solutions}
\includegraphics[width=\textwidth]{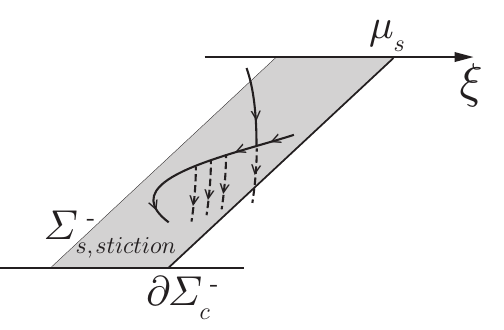}\end{subfigure}
\begin{subfigure}{.48\textwidth}\caption{ }\label{fig:non_uniqueness}
\includegraphics[width=0.9\textwidth]{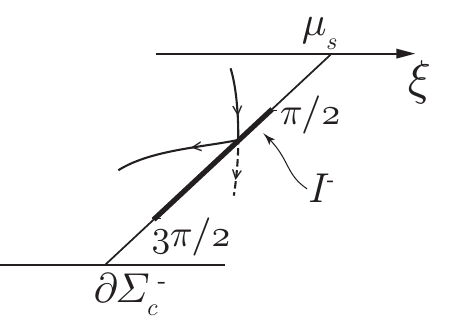}\end{subfigure}
\caption{\subref{fig:artifacts_solutions}: A  Carath\'eodory  solution with a pathological non-determinacy of the forward motion on the grey band. \subref{fig:non_uniqueness}: Stiction solutions interacting with the line of forward non-uniqueness $I^-$.    }\label{fig:non_unique}
 \end{figure}
Not every forward Carath\'eodory solution has a physical   meaning.  Consider for instance  a trajectory   that under the forward flow \cref{eq:model_dimensionless} lands inside $\Sigma_{s,\textnormal{stiction}}^- $, as shown in \cref{fig:artifacts_solutions}. There are two ways to obtain a forward solution  at this point: either leave $\Sigma$ and follow the vector field $Z^-(z)$, or remain on $\Sigma_s$.    Besides, the forward trajectory on $\Sigma_s$ may switch to $G^-$ at any point  within $\Sigma_{s,\textnormal{stiction}}^-$.  The orbits switching to $G^-$ appear to be mathematical artifacts, as they do not satisfy the  condition $\lvert \xi\rvert > \mu_s$ of the stiction law \cref{eq:stiction_dimensionless}. 
 There is a need to have a concept of solution that discards all these pathologies. The following definition does so, by using a ``minimal'' approach.  
\begin{definition} 
A  \textnormal{stiction solution}  $t \mapsto z(t)$, with $t\geq0$, is a  Carath\'eodory solution that  leaves $\Sigma_s$ only at the boundaries $\partial\Sigma_c^\pm$. \\
A stiction solution is called {\it singular} if for some $t_1\geq0$ the point   $z(t_1)$ belongs to one of the following sets
\[\begin{aligned}
I^+ &:= \{ (x,y,\theta) \in \mathbb{R}^2\times \mathbb{T}^1 \mid \xi = -\mu_s, y = 0, \theta \in[\pi/2,3\pi/2]\,\, \},\\ 
I^- &:= \{ (x,y,\theta) \in \mathbb{R}^2\times \mathbb{T}^1 \mid \xi = \mu_s, \,\,\,\,\, y = 0, \theta \in[0,\pi/2] \cup [3\pi/2,2\pi[\,\,\}.
\end{aligned} \]
Otherwise, the stiction solution is called {\it regular}.
\end{definition}
The sets $I^\pm$ belong to the boundary lines $\partial \Sigma_c^\pm$. Three vector fields  are defined on $\partial \Sigma_c^\pm$: $Z_s(z)$ and $Z^\pm(z)$. In particular on both $I^\pm$ the vector field $Z_s(z)$ points inside  $\Sigma_s$, as it follows from   \cref{prop:tangency_sigmas}, compare with \cref{fig:sticking_vectorfield}. \Cref{prop:existence_uniqueness} describes the existence and uniqueness of stiction solutions for model \cref{eq:model_dimensionless}. 

\begin{proposition}\label{prop:existence_uniqueness}
There exists a stiction solution $z(t)$ of problem \cref{eq:model_dimensionless}  for any initial initial condition $z_0 =z(0) \in \mathbb{R}^2\times\mathbb{T}^1$. Regular stiction solutions are forward unique, while singular stiction solutions are forward non-unique. 
\end{proposition}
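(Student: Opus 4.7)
The plan is to split the argument into three tasks: construct a global forward stiction solution for every initial datum by concatenating classical flows across the discontinuity; derive forward uniqueness for regular stiction solutions from classical ODE theory together with the sign analysis on $\partial\Sigma_c^\pm$ of \cref{prop:tangency_sigmas}; and, at the visible fold points of $Z_s$ identified there, exhibit two distinct stiction continuations with the same initial datum to establish forward non-uniqueness in the singular case.

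For existence I would use that $Z^\pm$ and $Z_s$ are affine on the closures of $G^\pm$ and $\Sigma_s$ respectively, so Picard--Lindel\"of yields a unique classical flow in each smooth region with no finite-time blow-up. Starting from an arbitrary $z_0$, a candidate stiction solution is built by concatenating these flows at every interface event: a passage through the interior of $\Sigma_c^\pm$ is continued by crossing; a passage through the interior of $\Sigma_s$ is continued by $Z_s$; at a point of $\partial\Sigma_c^\pm\setminus I^\pm$ the Lie-derivative computation of \cref{prop:tangency_sigmas} shows that $Z_s$ points strictly into $\Sigma_s$, so the continuation is the sticking one; and at a point of $I^\pm$ the transverse field $Z^\mp$ has the sign of $y'$ compatible with $G^\mp$ (since $\mu_s>\mu_d$), so the continuation is the slipping one. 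The concatenation is absolutely continuous, satisfies the Carath\'eodory integral \cref{eq:caratheodory}, and by construction leaves $\Sigma_s$ only at $\partial\Sigma_c^\pm$, so it is a stiction solution defined for all $t\geq 0$.

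For forward uniqueness of a regular stiction solution I would argue that the orbit never visits $I^\pm$, so each interface event falls in one of the four classes above in which the stiction law selects a unique continuation: within the open regions $G^\pm$ and $\Sigma_s$ one has the classical ODE uniqueness for an affine vector field, and at each boundary event the sign of $y'$ on the relevant side together with the requirement that $y'=0$ whenever $y\equiv 0$ on a subinterval eliminates every alternative continuation. The interface times form a discrete sequence---orbits in $G^\pm$ reach $\Sigma$ transversely, and orbits on $\Sigma_s$ reach $\partial\Sigma_c^\pm$ either never or at isolated times by \cref{cor:sticking_pos}---so the local unique pieces glue into a unique global forward solution.

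For forward non-uniqueness of a singular stiction solution the essential mechanism is the pair of visible fold points of $Z_s$ on $\partial\Sigma_c^\pm$ identified in \cref{prop:tangency_sigmas}, namely $p_- = (x_0,0,\pi/2)$ with $\gamma^2 x_0 = \mu_s - 1$ lying in $I^-$, and $p_+ = (x_0',0,3\pi/2)$ with $\gamma^2 x_0' = 1 - \mu_s$ lying in $I^+$. At $p_-$ the leaf $\mathcal{F}_{x_0}$ is periodic by \cref{cor:sticking_pos}, so one stiction continuation remains on $\Sigma_s$ indefinitely, while a second continuation follows $Z^-$ into $G^-$ because $-\mu_s + \mu_d < 0$; both are Carath\'eodory solutions that leave $\Sigma_s$ only at $\partial\Sigma_c^\pm$, hence stiction solutions, and they disagree for every $t>0$. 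The symmetric construction at $p_+$ handles $I^+$. The main obstacle I anticipate is the book-keeping at the boundary curves: one must verify that exactly one stiction continuation is admissible off the fold points (preserving regular uniqueness) while a second, slipping continuation genuinely opens up precisely at the folds lying in $I^\pm$.
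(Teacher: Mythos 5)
Your non-uniqueness argument only exhibits two forward continuations at the two isolated fold points $p_\pm$, and you explicitly state your expectation that a second continuation ``genuinely opens up precisely at the folds lying in $I^\pm$.'' That expectation is wrong, and it leaves the proposition unproved for a generic singular stiction solution, which reaches $I^\pm$ at a non-fold point (typically by landing there from $G^\mp$). The mechanism the paper's one-line proof relies on is that on the \emph{entire} arcs $I^\pm$ both available vector fields give admissible forward stiction continuations: $Z^\mp$ points strictly into $G^\mp$ on $\partial\Sigma_c^\mp$ (since $\mu_s>\mu_d$, the $y$-component of $Z^-$ is $-\mu_s+\mu_d<0$ on $\partial\Sigma_c^-$, and symmetrically for $Z^+$), while $Z_s$ points strictly into $\Sigma_s$ there (for $\partial\Sigma_c^-$ one has $\mathcal{L}_{Z_s}(\mu_s-\xi)=-\cos\theta>0$ when $\cos\theta<0$). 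Hence any orbit reaching a point of $I^\pm$ has both a sticking and a slipping continuation, both of which are stiction solutions; the non-uniqueness claim for \emph{all} singular solutions then follows immediately, not just for the two orbits through $p_\pm$.

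This also exposes a sign problem in your uniqueness step. You assert that at a point of $\partial\Sigma_c^\pm\setminus I^\pm$ the field $Z_s$ points strictly into $\Sigma_s$ and that therefore the continuation ``must'' be sticking; but if $Z_s$ points into $\Sigma_s$ at a boundary point then, as above, the alternative continuation along $Z^\mp$ is \emph{also} admissible, so forward uniqueness would fail there. What actually holds on $\partial\Sigma_c^\pm\setminus I^\pm$ is the opposite: $Z_s$ points out of $\Sigma_s$ into $\Sigma_c^\mp$, so a sticking continuation is not available, and the unique stiction continuation is the slipping one via $Z^\mp$. The source of the confusion is that the $\theta$-intervals printed in the definitions of $I^+$ and $I^-$ appear to have been interchanged: the paper's own remark preceding \cref{prop:existence_uniqueness} that ``on both $I^\pm$ the vector field $Z_s$ points inside $\Sigma_s$,'' together with the later assertion that $\hat{I}^\pm$ collapse onto $I^\pm$ (where $\hat{I}^-$ carries $\theta\in\,]\pi/2,3\pi/2[$), both require $I^-$ to be the arc of $\partial\Sigma_c^-$ with $\cos\theta\le 0$, i.e.\ $\theta\in[\pi/2,3\pi/2]$, and analogously for $I^+$. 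Reading the arcs this way, your concatenation construction for existence goes through unchanged, your uniqueness step is repaired (off $I^\pm$ the sticking continuation is impossible and only the $Z^\mp$ crossing survives), and the non-uniqueness becomes the simple bifurcation of choices at \emph{every} point of $I^\pm$ that the paper's proof records.
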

\begin{proof}
Stiction solutions  are Carath\'eodory solutions, hence they exist. Consider a trajectory $z(t)$ that reaches $ I^-$ at a time $t_1$, as shown in \cref{fig:non_uniqueness}. Two different forward solutions   satisfy \cref{eq:caratheodory}: either leave $\Sigma$ and follow the vector field $Z^-(z)$, or remain on $\Sigma_s$.  Hence the singular stiction solution is forward non-unique. Similarly for $I^+$. On the contrary, if $z(t) \notin I^\pm$ at any $t\geq0$, then  there is always only one way to piece together the vector fields at the boundaries $\partial\Sigma_c^\pm$ and therefore $z(t)$ is forward unique.
 \end{proof}
The  non-uniqueness of models with stiction friction has  been  mentioned  in \cite{bliman1995a,olsson1997a}, without any further explanation.  It is not possible to predict whether,  for singular stiction solutions,  the mass will slip or stick in forward time. Hence numerical simulations that use stiction friction have to make a choice at the points of non-uniqueness to compute the forward flow, often without noticing that a choice is made. This means that   solutions may unawarely be discarded. 
\Cref{sec:regularization} investigates the non-uniqueness by regularization.    

\section{Regularization}\label{sec:regularization}
A regularization of the vector field $Z(z)$ is a $1$-parameter family $Z_\vare(z)$ of smooth vector fields  defined by 
\be \label{eq:regularization_combination} 
Z_\vare(z) := \frac{1}{2} Z^+(z)(1+\phi(\vare^{-1}y)) + \frac{1}{2} Z^-(z)(1-\phi(\vare^{-1}y)), \ee 
for $0<\vare\ll1$. The function $\phi(y)$ is an odd, $C^k$-function $(1\leq k \leq \infty)$ that satisfies
\be\label{eq:regularization} \phi(y) = \begin{cases}
1,&  y \geq1,\\
\mu_s/\mu_d,&  y = \delta,
\end{cases} \quad \text{and} \quad
\phi'(y) \,\, \begin{cases}
>0,&   0<y <\delta,\\
=0,&   y= \delta,\\ 
<0,&  \delta<y <1,
\end{cases} \quad  \phi''(\delta)<0, \ee
where $0<\delta<1$. This function is shown in  \cref{fig:regularization}.   The regularized problem $z'=Z_\vare(z)$ has the advantages of being smooth, and of approximating the discontinuous problem \cref{eq:model_dimensionless}  for $0<\vare\ll1$. In particular,  by the first property of \cref{eq:regularization}, it follows that   $Z_\vare(z) = Z^\pm(z)$ for $y\gtrless \pm\vare$, so that the two problems coincide outside of the {\it region of regularization} $y \in ]-\vare,\vare[$. In non-compact form $z'=Z_\vare(z)$   is the singularly perturbed problem
\be \label{eq:vectorfield_regularized} 
\begin{aligned}
x' &= y,\\
y' &= - \xi(x,\theta) - \mu_d \phi(\vare^{-1}y),\\
\theta' &= 1,
\end{aligned}\ee
with $\xi(x,\theta)= \gamma^2 x + \sin\theta$ the function introduced in \cref{sec:model}. 
 \begin{figure}[t!]
 \centering
\includegraphics[width=0.3\textwidth]{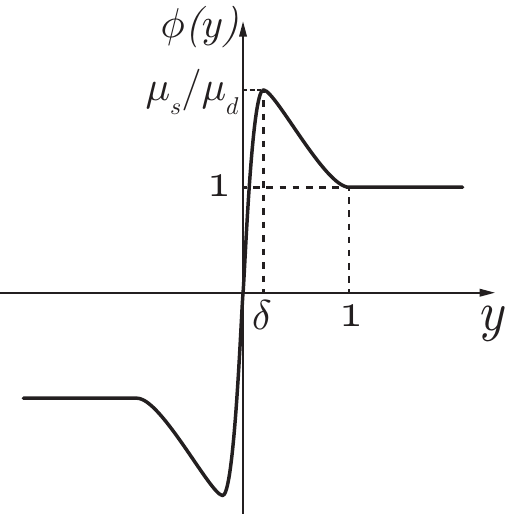}
\caption{A regularization function $\phi(y)$.}\label{fig:regularization}
 \end{figure}
When solutions of \cref{eq:vectorfield_regularized} enter the region of regularization, it is easier to follow them in the rescaled coordinate $\hat{y} = \vare^{-1}y$ so that  $y=\pm\vare$ become $\hat{y}=\pm1$.  
In the new scale, system \cref{eq:vectorfield_regularized} becomes the multiple time scales problem 
\be\label{eq:slow_problem}
\begin{aligned}
x' &=  \vare \hat{ y},\\
\vare \hat{y}' &= - \xi(x,\theta) - \mu_d \phi(\hat{y}),\\
\theta' &= 1,
\end{aligned}
\ee
 that  is also known as the {\it slow problem} \cite{kuehn2015a,jones1995a}.  By introducing the  fast time $\tau:= t/\vare$,  system \cref{eq:slow_problem} is equivalent to the {\it fast problem} 
\be\label{eq:fast_problem}
\begin{aligned}
\dot{x} &=   \vare^2 \hat{y} ,\\
\dot{ \hat{y}} &= - \xi(x,\theta) - \mu_d \phi(\hat{y}),\\
\dot{\theta} &= \vare,
\end{aligned}
\ee
with the overdot meaning the differentiation with respect to the fast time $\tau$.
The parameter $\vare$  measures both the perturbation from the discontinuous system, as in equation \cref{eq:regularization_combination}, and the separation of the time scales. The standard procedure  for solving  multiple time scales problems is to combine the solutions of the {\it layer problem} 
 \be\label{eq:layer_problem}
\dot{ \hat{y}} = - \xi(x,\theta) - \mu_d \phi(\hat{y}), \quad (x,\theta)(\tau_0) = (x_0,\theta_0),
\ee
with the ones of the {\it reduced problem}
\be\label{eq:reduced_problem}
\begin{aligned}
x' &=  0,\\
0 &= - \xi(x,\theta) - \mu_d \phi(\hat{y}),\\
\theta' &= 1,
\end{aligned}
\ee
where \cref{eq:layer_problem} and \cref{eq:reduced_problem} are the limit for $\vare\to0$ of  the fast and slow problems \cref{eq:fast_problem} and \cref{eq:slow_problem}.
\begin{figure}[t!]
 \centering
\includegraphics[width=0.55\textwidth]{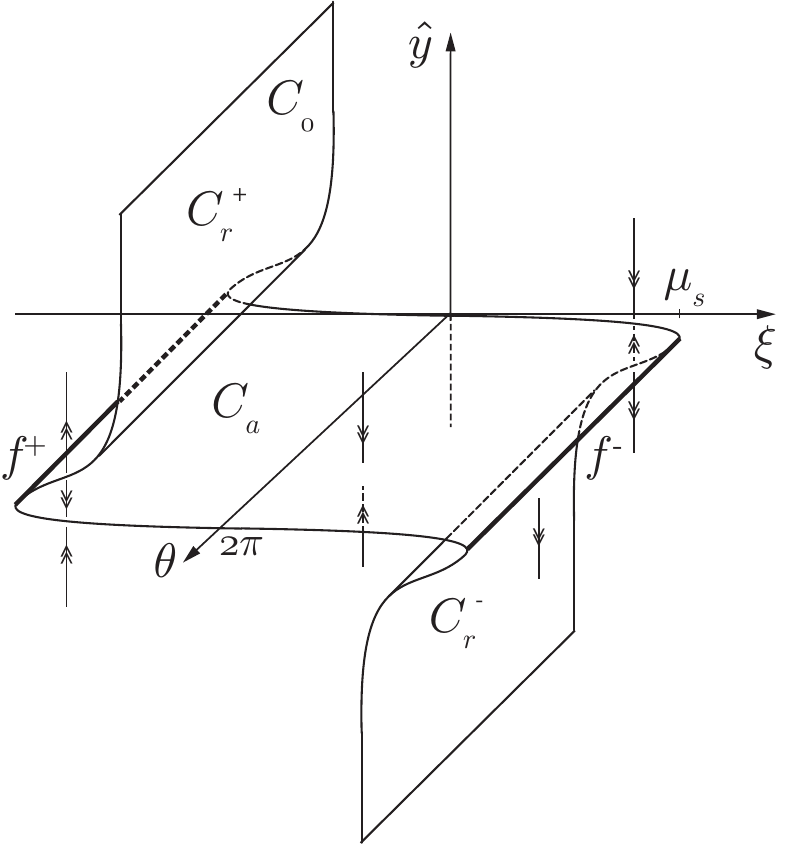}
\caption{Critical manifold $C_0$ and its stability properties. In bold: $f^\pm$. The double arrow denotes dynamics in the fast time $\tau$.}\label{fig:critical_manifold}
 \end{figure}
  The set of fixed points of the layer problem \cref{eq:layer_problem} is called  the {\it critical manifold}
\be\label{eq:critical_manifold}
C_0 := \{ (x,\hat{y},\theta)\in\mathbb{R}^2\times\mathbb{T}^1 \mid \quad \xi(x,\theta) + \mu_d \phi(\hat{y}) =0 \}, 
\ee
and the solutions of the reduced problem \cref{eq:reduced_problem} are constrained to it. The critical manifold is said to be {\it normally hyperbolic} in the points where
\[ \frac{\partial \dot{\hat{y}}}{\partial \hat{y}} \bigg\rvert_{C_0} = -\mu_d  \phi'(\hat{y}^{C_0})\]
is non zero, and $ \hat{y}^{C_0} = \phi^{-1}(-\xi(x,\theta)/\mu_d)$.  It follows that $C_0$ is not normally hyperbolic on the two {\it fold lines} 
\[ f^\pm := \{(x,\hat{y},\theta) \in \mathbb{R}^2\times\mathbb{T}^1 \mid \xi = \mp \mu_s, \hat{y} = \pm \delta \}. \]
These lines separate $C_0$ into the three invariant sets of \cref{eq:layer_problem}
\[ \begin{aligned}
C_r^+ &:= \{ (x,\hat{y},\theta) \in C_0 \mid  \delta<\hat{y}<1\},\\
C_a &:= \{ (x,\hat{y},\theta) \in C_0 \mid -\delta<\hat{y} <\delta\},\\
C_r^- &:= \{ (x,\hat{y},\theta) \in C_0 \mid  -1<\hat{y}<-\delta\},\\
\end{aligned}\]
as shown in \cref{fig:critical_manifold}, where $C_a$ is attracting and  $C_r^{\pm}$ are repelling. Notice that  $C_a$ is a graph  $\hat{y} \in \,\,]-\delta,\delta[$ over  $\Sigma_s$, while   $C_r^+$ ($C_r^-$) is a   graph  $ \hat{y}  \in\,\,]\delta,1[$ ($ \hat{y}  \in\,\,]-1,-\delta[$) over  $\Sigma_{s,\text{stiction}}^+$ ($\Sigma_{s,\text{stiction}}^-$). In terms of $(x, y, \theta)$, these sets  collapse onto $\Sigma_s$ and  $\Sigma_{s,\text{stiction}}^\pm$ respectively as $\vare\to0$, since $y=\vare\hat{y}$. Similarly, $f^\pm$ collapse onto $\partial \Sigma_c^\pm$.  This means that  in the $(x,y,\theta)$-space it is not possible to distinguish whether a trajectory belongs to $C_a$ or to  $C_r^\pm$ for $\vare=0$. 
\begin{proposition} \label{prop:collapse_epsilon} The reduced problem on $C_0$ coincides with the  vector field $Z_s(z)$ on $\Sigma_s$.  \end{proposition}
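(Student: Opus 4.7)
The plan is to directly compute the reduced vector field in the base $(x,\theta)$-coordinates and compare with the explicit form of $Z_s$ obtained in \cref{sec:PWS_analysis}. The algebraic constraint in \cref{eq:reduced_problem}, $\xi(x,\theta) + \mu_d \phi(\hat{y}) = 0$, is exactly the defining relation \cref{eq:critical_manifold} of $C_0$. Away from the fold lines $f^\pm$ (where $\phi'(\hat{y})=0$), this expresses $\hat{y}$ locally as a smooth graph $\hat{y}^{C_0}(x,\theta)$, so the reduced dynamics on $C_0$ is completely described by the two differential equations of \cref{eq:reduced_problem}, which read $(x',\theta') = (0,1)$. The $\hat{y}$-component is slaved by differentiating the constraint along the flow, yielding $\hat{y}' = -\cos\theta/(\mu_d\phi'(\hat{y}))$, and so contributes no independent direction to the dynamics on $C_0$.

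Second, I would recall from \cref{sec:PWS_analysis} that $Z_s$ has the explicit form $(x,\theta)'=(0,1)$ with identically vanishing $y$-component on $\Sigma_s$. Matching the two vector fields then reduces to an identification of domains: on the attracting branch $C_a=\{\hat{y}\in\,\,]-\delta,\delta[\}$, the constraint $\xi=-\mu_d\phi(\hat{y})$, together with the properties of $\phi$ (odd, strictly monotone on $]-\delta,\delta[$, with $\phi(\pm\delta)=\pm\mu_s/\mu_d$), forces $\xi\in\,\,]-\mu_s,\mu_s[$, which is precisely the defining condition for $\Sigma_s$. Under the rescaling $y=\vare\hat{y}$ the graph $C_a$ collapses onto $\Sigma_s$ as $\vare\to 0$, and the planar reduced flow $(x',\theta')=(0,1)$ coincides with $Z_s$. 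An identical computation on $C_r^\pm$ identifies the reduced flow there with the natural extension of $Z_s$ to $\Sigma_{s,\text{stiction}}^\pm$.

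The statement is essentially bookkeeping about the singular limit of the regularization, so I do not anticipate a genuine obstacle. The one subtle point worth stressing in the write-up is that $\hat{y}$ along the reduced flow is algebraically slaved to $(x,\theta)$ by the constraint, so it produces no independent component of motion on $C_0$; this is consistent with $y\equiv 0$ on $\Sigma_s$ in the limit $\vare\to 0$ and justifies comparing the planar vector field $(x,\theta)'=(0,1)$ on the graph $C_0$ directly with $Z_s$.
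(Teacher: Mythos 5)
Your proof is correct and takes essentially the same approach as the paper's one-line argument: constraining the reduced problem to $C_0$ gives $(x',\theta')=(0,1)$, which is exactly $Z_s$. One small imprecision worth fixing in the write-up: $\Sigma_{s,\text{stiction}}^\pm$ are \emph{subsets} of $\Sigma_s$ rather than regions outside it, so $Z_s$ is already defined there and no ``natural extension'' is needed — the branches $C_r^\pm$ are simply additional sheets of $C_0$ lying over those strips.
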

The proof is straightforward since  the reduced  problem, once  constrained to $C_0$, is  $(x', \theta') = (0, 1)$. 
From this Proposition, and the fact that $Z_\vare(z) = Z^\pm(z)$ for $y\gtrless \pm\vare$, it follows that the regularized problem \cref{eq:vectorfield_regularized} captures all the main  features of the discontinuous vector field \cref{eq:model_dimensionless} for $\vare\to0$. Furthermore, when $0<\vare\ll1$ the solutions of \cref{eq:vectorfield_regularized} are uniquely defined, so that the issue of   non-uniqueness   of \cref{eq:model_dimensionless} is eliminated.   \Cref{prop:collapse_epsilon} also motivates the  conditions \cref{eq:regularization} for the function $\phi(y)$, as explained in the following Remark. 
 \begin{remark}\label{rem:sotomayor_texeira}
The well known Sotomayor and Teixeira  (ST) regularization, considers a regularization function  $\phi^{ST}(y)$  that is monotonously increasing in $y\in]-1,1[$  \cite{sotomayor1996a}. At the singular limit, the regularization $Z_\vare^{ST}(z)$ has an  attracting invariant manifold $C_a^{ST}$ that is   a graph of $\hat{y} $ over  $\Sigma_{s,\text{Filippov}}$ \cite{llibre2008a,kristiansen2014a}. In terms of $(x,y,\theta)$   this set collapses onto $\Sigma_{s,\text{Filippov}}$ instead of $\Sigma_s$, and hence  $Z_\vare^{ST}(z)$ does not tend to $Z(z)$ as $\vare\to0$. For this reason the ST regularization is inadequate for model \cref{eq:model_dimensionless}.
\end{remark}
The results of Fenichel \cite{fenichel1974a,fenichel1979a} guarantee that for $\vare=0$, a normally hyperbolic, compact and invariant manifold $S_0 \subset C_0$ perturbs into a non-unique and  invariant {\it slow manifold} $S_\vare$, that is $\vare$-close to $S_0$ for $\vare$ sufficiently small. Furthermore, system \cref{eq:fast_problem} has an invariant foliation with base on $S_\vare$, that is a perturbation of the foliation of the layer problem \cref{eq:layer_problem} with base on $S_0$.\\ 
Let $\varphi_t(z_0)$ be a regular stiction solution of model \cref{eq:model_dimensionless} with initial condition in $z_0$, and let $\varphi_{t}^{\vare}(z_0)$ be the solution of the regularized problem \cref{eq:vectorfield_regularized} for the same initial condition. The following statement relates these two solutions. 
 \begin{proposition}\label{prop:regular_stiction_solution}
For any $T>0$ there exists an $\vare_0>0$ such that the distance between the two solutions $\varphi_{t}^{\vare}(z_0)$ and $\varphi_t(z_0)$ is bounded by: $\lvert \varphi_t^\vare(z_0)-\varphi_t(z_0)\rvert \leq c(T) \vare^{2/3}$ for $t\in[0,T]$, where $ c(T)$ is a constant that depends upon $T$, and  $0<\vare\leq\vare_0$.
\end{proposition}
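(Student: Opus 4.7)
The plan is to compare the two solutions piece by piece along a finite decomposition of $[0,T]$, using standard smooth ODE estimates on the open regions $G^\pm$, Fenichel/slow-manifold estimates near the sticking strip, and fold-passage estimates at the transition lines $\partial\Sigma_c^\pm$. Since a regular stiction solution avoids the singular sets $I^\pm$ by hypothesis, transversality can be used throughout, and on the compact time window $[0,T]$ only finitely many transitions $G^\pm \leftrightarrow \Sigma_s$ occur.

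First I would cover the slip phases. On any maximal subinterval where $\varphi_t(z_0)$ lies in $G^+$ (resp.\ $G^-$) bounded away from $\Sigma$, the regularized field $Z_\vare$ coincides with $Z^\pm$ because $\phi(\vare^{-1}y)=\pm 1$ for $|y|\geq\vare$. A Gronwall argument, using the Lipschitz constant of $Z^\pm$ on a compact set containing both trajectories, propagates an initial error of size $\eta$ to $Ce^{LT}\eta$. So the nontrivial work is entirely at the entry to and exit from $\Sigma_s$.

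For the entry into a sticking segment (landing on $\Sigma_s$ transversally at a point with $|\xi|<\mu_s$), the corresponding regularized trajectory enters the strip $|y|<\vare$ transversally and, by the attracting normal hyperbolicity of $C_a$, is exponentially contracted toward the Fenichel slow manifold $S_\vare^a$ after a fast-time boundary layer of length $O(\vare|\log\vare|)$; cf.\ Fenichel \cite{fenichel1979a}. By \Cref{prop:collapse_epsilon} the reduced flow on $C_0$ is exactly the vector field $Z_s$, so standard Fenichel theory yields $|\varphi_t^\vare(z_0)-\varphi_t(z_0)|=O(\vare)$ along the entire sticking segment, up to a neighbourhood of the exit fold line $f^\pm$.

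The main obstacle, and what fixes the $\vare^{2/3}$ rate, is the exit from $\Sigma_s$ through $\partial\Sigma_c^\pm$. By \Cref{prop:tangency_sigmas}, regular exit occurs at a visible fold of $Z_s$ with $\partial\Sigma_c^\pm$, which lifts to a generic (jump) fold point of $C_0$ on $f^\pm$: $C_0$ has a quadratic fold in $\hat y$ (since $\phi''(\delta)<0$), the slow flow on $C_a$ crosses the fold with nonzero velocity (non-degeneracy, from $\sin\theta\neq 0$ at the exit $\theta\notin\{\pi/2,3\pi/2\}$), and the layer equation points away from $C_a$ after the fold. This is exactly the setting of the Mishchenko--Rozov fold analysis (see e.g.\ \cite{kuehn2015a,jones1995a}): blowing up the fold point by $(x-x_*,\hat y-\delta,\theta-\theta_*,\vare)\mapsto (\vare^{2/3}\bar x,\vare^{1/3}\bar y,\vare^{2/3}\bar\theta,\vare)$ resolves the loss of hyperbolicity, and the transition map through the fold neighbourhood has size $O(\vare^{2/3})$ in the original coordinates, with the exiting trajectory of $Z_\vare$ leaving in an $O(\vare^{2/3})$-neighbourhood of the drop curve determined by the layer flow starting from the fold. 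That drop curve is precisely the slip branch followed by the stiction solution.

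Finally I would chain the estimates: each slip segment propagates the error multiplicatively via Gronwall; each sticking entry collapses the error to $O(\vare)$ via Fenichel; each sticking exit adds an $O(\vare^{2/3})$ error via the fold passage. Since there are only finitely many such events on $[0,T]$ (by transversality away from $I^\pm$ and compactness), the cumulative error is bounded by $c(T)\vare^{2/3}$ for all $\vare\in(0,\vare_0]$, which is the claimed estimate. The hard step is the fold analysis, but it is entirely standard once the non-degeneracy conditions verified in \Cref{prop:tangency_sigmas,prop:tangency_sigmacpm} are in hand.
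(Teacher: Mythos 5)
Your proposal follows essentially the same route as the paper's proof: Fenichel theory gives $\mathcal{O}(\vare)$-closeness away from the fold lines $f^\pm$, and a regular fold/jump-point passage estimate gives the governing $\mathcal{O}(\vare^{2/3})$ bound at the exit from the sticking strip; regularity of the stiction solution ensures the singular orbit is unique so these estimates can be chained over finitely many slip/stick transitions in $[0,T]$. The paper compresses the fold-passage step into a single citation of Szmolyan--Wechselberger (Theorem 1 of \cite{szmolyan2004a}, the $2$-slow/$1$-fast regular jump point), whereas you unpack it via the blow-up rescaling; the content is the same, though for the record the relevant reference in this $3$D setting is Szmolyan--Wechselberger rather than the planar Mishchenko--Rozov result, and the blow-up need only scale the slow direction transverse to the fold line, not both $x$ and $\theta$ independently.
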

\begin{proof}
Fenichel's theorems guarantee that, sufficiently far from the fold lines $f^\pm$, the orbit $\varphi_t^\vare(z_0)$ of the slow-fast problem \cref{eq:slow_problem} is $\mathcal{O}(\vare)$-close to the singular trajectory $\varphi_t(z_0)$. At the folds $f^\pm$, if at the singular level the solutions are unique, the result by  Szmolyan and Wechselberger \cite[Theorem 1]{szmolyan2004a} guarantees  that the distance between the two trajectories is bounded by $\mathcal{O}(\vare^{2/3})$ for a finite time interval $T$. This is the case of regular stiction solutions.   \end{proof}
The following Proposition relates the family of sticking solutions of \cref{cor:sticking_pos} with a family of trajectories on the slow manifold for the regularized problem. For this, define $S_a  \subset C_a$ as the compact, invariant, normally hyperbolic set $ S_a:=\{(x,\hat{y},\theta)\in\mathbb{R}^2\times\mathbb{T}^1\mid \lvert \gamma^2 x \rvert  \leq \mu_s-1-c, \xi(x,\theta) + \mu_d \phi(\hat{y})=0 \}$ for $\mu_s>1$ and $c\in\mathbb{R}^+$  small.  The set $S_a$ is a graph over the set  of  invariant circles of \cref{cor:sticking_pos} for $c\to0$. 
\begin{proposition}\label{prop:sticking_limit_cycle} 
For $0<\vare\ll1$ the  set $S_a$ perturbs into a slow manifold  $S_{a,\vare}$ and on it,  there exists a unique, attracting $2\pi $-periodic limit cycle passing through $(x,\theta)=(0,0) + \mathcal{O}(\vare)$. \end{proposition}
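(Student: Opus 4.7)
The plan is to apply Fenichel's theorem to the compact, normally hyperbolic invariant set $S_a$, which immediately produces the perturbed slow manifold $S_{a,\vare}$ lying $\mathcal{O}(\vare)$-close to $S_a$ for $0<\vare\ll1$. Since $\theta'=1$ everywhere, $S_{a,\vare}$ can be written as a graph $\hat y=h(x,\theta,\vare)$ with
\[
h(x,\theta,0)=\phi^{-1}\!\bigl(-\xi(x,\theta)/\mu_d\bigr)
\]
inherited from the critical manifold (the branch of $\phi^{-1}$ with values in $(-\delta,\delta)$), and the slow flow on $S_{a,\vare}$ reduces to the $2\pi$-periodic, slowly varying scalar ODE
\[
\frac{dx}{d\theta}=\vare\, h(x,\theta,\vare),\qquad \theta\in\mathbb{T}^1.
\]
The natural tool is then classical averaging, equivalently a first order Melnikov/displacement-map calculation on the Poincar\'e section $\{\theta=0\}$.

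Next I would study the averaged vector field
\[
\bar h(x):=\frac{1}{2\pi}\int_0^{2\pi}\phi^{-1}\!\bigl(-(\gamma^2 x+\sin\theta)/\mu_d\bigr)\,d\theta.
\]
At $x=0$ the substitution $\theta\mapsto\theta+\pi$ combined with the oddness of $\phi$ (hence of $\phi^{-1}$) shows that the integrand satisfies $f(\theta+\pi)=-f(\theta)$, so $\bar h(0)=0$. Differentiating under the integral sign and using $(\phi^{-1})'>0$ on the attracting branch $C_a$ gives
\[
\bar h'(0)=-\frac{\gamma^2}{2\pi\mu_d}\int_0^{2\pi}(\phi^{-1})'\!\bigl(-\sin\theta/\mu_d\bigr)\,d\theta<0,
\]
so the averaged equation has a hyperbolic, asymptotically stable equilibrium at $x=0$.

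Finally I would transfer this back to the full periodic system. Writing the $2\pi$-Poincar\'e map of the scalar ODE as $P_\vare(x)=x+2\pi\vare\bar h(x)+\mathcal{O}(\vare^2)$, the rescaled displacement $D(x,\vare):=(P_\vare(x)-x)/\vare=2\pi\bar h(x)+\mathcal{O}(\vare)$ satisfies $D(0,0)=0$ and $\partial_x D(0,0)=2\pi\bar h'(0)\neq0$. The implicit function theorem then yields a unique root $x^\ast(\vare)=\mathcal{O}(\vare)$, which is the $2\pi$-periodic orbit crossing $(x,\theta)=(0,0)+\mathcal{O}(\vare)$; its multiplier $P_\vare'(x^\ast)=1+2\pi\vare\bar h'(0)+\mathcal{O}(\vare^2)<1$ certifies attractivity within $S_{a,\vare}$, and normal hyperbolicity of $S_{a,\vare}$ provides the transverse contraction. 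The main obstacle is conceptual rather than computational: one must make sure that the averaging/IFT step is carried out uniformly on a compact piece of the slow manifold bounded away from the fold lines $f^\pm$, where the graph parametrisation $h$ breaks down; this is exactly the role of the buffer $c>0$ in the definition of $S_a$, ensuring $|\gamma^2 x+\sin\theta|<\mu_s$ along the entire candidate cycle.
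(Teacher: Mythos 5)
Your proposal is correct and follows essentially the same approach as the paper: Fenichel reduction to a scalar $2\pi$-periodic ODE for $x(\theta)$, the rescaled displacement/averaging map, oddness of $\phi^{-1}$ and $\sin$ to locate the zero at $x=0$, and the implicit function theorem with the sign of the derivative for existence, uniqueness, and stability. One small point worth stating explicitly is that $\bar h'(x)<0$ on all of the compact set $S_a$ (not just at $x=0$), so the zero of $\bar h$ is globally unique there, which is what justifies the claimed uniqueness of the limit cycle on $S_{a,\vare}$ rather than merely its local uniqueness near $x=0$.
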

\begin{proof}
From \cref{prop:collapse_epsilon} and \cref{cor:sticking_pos} it follows that $S_a$ is filled by circular trajectories. By Fenichel's results, when $0<\vare\ll1$ the set $S_a$ perturbs  into the graph $\hat{y} =  \phi^{-1}(-\xi(x,\theta)/\mu_d) + \vare h_1(x,\theta)$. On this graph the slow problem \cref{eq:slow_problem} is a $2\pi$-periodic, non-autonomous ODE for $x(\theta)$, where $\theta$ has the meaning of time:
\be\label{eq:xdynamicsepsilon} x'(\theta) = \vare\phi^{-1}\left(\frac{-\xi(x,\theta)}{\mu_d}\right) + \vare^2 h_1(x,\theta).\ee
Fix a global Poincar\'e section at $\theta=0$, and define the return map $P(x(0),\vare) = x(2\pi)$. The fixed points of this map for $0<\vare\ll1$ are the zeros of the function 
\[Q(x(0),\vare):= \frac{P(x(0),\vare)  - x(0)}{\vare}  = \int_0^{2\pi}  \phi^{-1}\left(\frac{-\gamma^2 x(s) - \sin s }{\mu_d}\right)\,ds + \mathcal{O}(\vare), \]
where   the last equality is obtained by integrating \cref{eq:xdynamicsepsilon}.
For $\vare=0$,    \cref{eq:xdynamicsepsilon} implies  $x(\theta)=x(0)$. 
 Both the functions $\phi^{-1}$ and $\sin s$ are  symmetric with respect to the origin.  This means that $Q(x(0),0)=0$ if and only if $x(0)=0$. 
Furthermore $(x(0),0)$ is regular  because 
\be \label{eq:zerosQ} \partial_{x}Q(0,0)  =  - \frac{\gamma^2}{\mu_d} \int_0^{2\pi} \frac{1}{\phi'(-\sin s/\mu_d)}\, ds<0\ee
and $\phi'(\hat{y})$ is always positive in $S_a$, since $\hat{y}\in]-\delta,\delta[$. Then the Implicit Function Theorem guarantees that for $0<\vare\ll1$ there exists   $x(0) = m(\vare)$ such that $Q(m(\vare),\vare)=0$. Hence $x(0) = m(\vare)$ belongs to a stable periodic orbit  since from \cref{eq:zerosQ} it follows that $\lvert \partial_{x(0)}P(x(0),\vare)\rvert<1$ for $0<\vare\ll1$.
 \end{proof}
Therefore, when $\mu_s>1$ the family of circles in $\Sigma_s$ bifurcates into a single attracting limit cycle on the slow manifold $S_{a,\vare}$.   This result gives an upper bound of the time $T$ of \cref{prop:regular_stiction_solution} as a function of $\vare$, since on the slow manifold $S_{a,\vare}$, after a time $t= \mathcal{O}(1/\vare)$, orbits are $\mathcal{O}(1)$ distant to the original   family of circles in $\Sigma_s$. Furthermore, the regularization of regular stiction solutions does not necessarily remain  uniformly close.  \\
It is not possible to make a statement similar to \cref{prop:regular_stiction_solution} for singular stiction solutions, as they have non-unique forward solutions at the singular level. A further understanding can be obtained by studying the reduced problem \cref{eq:reduced_problem}. This   differential algebraic equation,  is rewritten as a standard ODE by explicating the algebraic condition with respect to $x$ and  by differentiating it with respect to the time $t$:
 \be \label{eq:reducedproblem_originaltime} 
\begin{aligned}
 - \mu_d \phi'(\hat{y}) \hat{y}' &=  \cos \theta,\\
\theta' &=1.
\end{aligned}\ee
\begin{proposition}\label{prop:reduced_problem}
The circles $f^\pm \subset \{ \phi'(\hat{y}) =0\}$ are lines of singularities for the reduced problem \cref{eq:reducedproblem_originaltime}, and solutions reach them in finite time. On $f^\pm$, the points $(\hat{y},\theta) = (-\delta, \pi/2)$ and $(\hat{y},\theta) = (\delta, 3\pi/2)$ are folded saddles, while   $(\hat{y},\theta) = (\delta, \pi/2)$ and $(\hat{y},\theta) = (-\delta, 3\pi/2)$ are folded centers.   Moreover the intervals $\hat{I}^\pm\subset f^\pm$  defined as
\[\begin{aligned}
\hat{I}^- :=& \{ (x,\hat{y},\theta) \in \mathbb{R}^2\times\mathbb{T}^1 \mid \xi = \mu_s, \quad \hat{y}= - \delta,  \, \theta   \in]\pi/2,3\pi/2[\,\, \},\\
\hat{I}^+ :=& \{ (x,\hat{y},\theta) \in \mathbb{R}^2\times\mathbb{T}^1\, \mid \xi = -\mu_s, \hat{y}=  \delta,  \quad\theta\in [0,\pi/2[ \,\cup\,]3\pi/2,2\pi[\,\,\}, 
\end{aligned}
\]
have non-unique forward solutions.\end{proposition}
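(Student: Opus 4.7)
The plan is to recast the reduced problem \cref{eq:reducedproblem_originaltime} in the explicit form $\hat{y}'=-\cos\theta/(\mu_d\phi'(\hat{y}))$, $\theta'=1$, so that the singular set is visibly $\{\phi'(\hat{y})=0\}$. By \cref{eq:regularization}, $\phi'$ vanishes precisely at $\hat{y}=\pm\delta$, and the critical-manifold constraint $\xi+\mu_d\phi(\hat{y})=0$ together with $\phi(\delta)=\mu_s/\mu_d$ (and by oddness of $\phi$, $\phi(-\delta)=-\mu_s/\mu_d$) forces $\xi=\mp\mu_s$ there; this identifies the singular loci with $f^\pm$. For finite-time reach, since $\theta'=1$ and $x(t)=x_0$ is constant on $C_0$, the constraint becomes $\mu_d\phi(\hat{y}(\theta))=-\gamma^2 x_0-\sin\theta$, and $\hat{y}(\theta)$ attains $\pm\delta$ exactly when $\sin\theta=\mp\mu_s-\gamma^2 x_0$, which happens at a finite $\theta$-value (hence finite $t$) whenever the right-hand side lies in $[-1,1]$.

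Next, to classify the four folded singularities I would desingularize by the standard time rescaling $dt=-\mu_d\phi'(\hat{y})\,d\tau$, obtaining the smooth planar system
\[
\dot{\hat{y}}=\cos\theta,\qquad \dot{\theta}=-\mu_d\phi'(\hat{y}),
\]
whose equilibria are precisely $\{\cos\theta=0\}\cap\{\phi'(\hat{y})=0\}$, i.e.\ the four listed points. The Jacobian has the form $\left(\begin{smallmatrix}0&-\sin\theta\\-\mu_d\phi''(\hat{y})&0\end{smallmatrix}\right)$, with trace zero and $\lambda^2=\mu_d\sin\theta\,\phi''(\hat{y})$. Since $\phi''(\delta)<0$ by \cref{eq:regularization} and $\phi''(-\delta)>0$ because $\phi$ odd implies $\phi''$ odd, one verifies that $(-\delta,\pi/2)$ and $(\delta,3\pi/2)$ give $\lambda^2>0$ (folded saddles), while $(\delta,\pi/2)$ and $(-\delta,3\pi/2)$ give $\lambda^2<0$ (folded centers).

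For the non-uniqueness claim, I would pick $p=(x_*,-\delta,\theta_*)\in\hat{I}^-$, where $\theta_*\in\,]\pi/2,3\pi/2[$ so $\cos\theta_*<0$. In a one-sided neighbourhood of $p$ inside $C_a$ we have $\hat{y}\in\,]-\delta,\delta[$ and hence $\phi'(\hat{y})>0$, so the reduced flow gives $\hat{y}'=-\cos\theta/(\mu_d\phi')>0$ and the trajectory moves into $C_a$; on the other side, inside $C_r^-$ we have $\phi'(\hat{y})<0$, so $\hat{y}'<0$ and the trajectory moves into $C_r^-$. Both are legitimate forward reduced orbits emanating from $p$, demonstrating non-uniqueness. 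The argument for $\hat{I}^+$ is symmetric: at $(\delta,\theta_*)$ with $\cos\theta_*>0$, the signs of $\phi'$ on $C_a$ and $C_r^+$ yield analogous forward trajectories into each branch.

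The main obstacle is keeping the sign bookkeeping consistent across both folds; the odd symmetry of $\phi$ (equivalently, even $\phi'$ and odd $\phi''$) pairs the four folded equilibria dually, and this symmetry is what ultimately makes the saddle/center classification and the complementary $\theta$-intervals in $\hat{I}^\pm$ fall out correctly. Care must also be taken at $\theta_*\in\{\pi/2,3\pi/2\}$, where the sign argument degenerates because $\cos\theta_*=0$ — but these are exactly the folded equilibria, legitimately excluded from the open intervals defining $\hat{I}^\pm$.
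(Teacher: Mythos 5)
Your proposal is correct and follows essentially the same route as the paper's proof: rewrite the reduced problem in explicit form, desingularize by rescaling time by $\mu_d\phi'(\hat{y})$, classify the four fold equilibria by the sign of $\lambda^2 = \mu_d\sin\theta\,\phi''(\hat{y})$ using the odd symmetry of $\phi$ (hence even $\phi'$, odd $\phi''$), and obtain non-uniqueness on $\hat{I}^\pm$ by a sign analysis of $\hat{y}'$ on the $C_a$ and $C_r^\pm$ sides. One small caveat: the extra minus sign in your ``standard'' time rescaling $dt=-\mu_d\phi'(\hat{y})\,d\tau$ reverses the orientation of the desingularized flow on the attracting branch $C_a$ relative to the paper's choice $\mu_d\phi'(\hat{y})\,d\hat{t}=dt$, which is harmless for the saddle/center dichotomy since $\lambda^2$ is unchanged under $\lambda\mapsto-\lambda$, but would swap the labeling of the vrai and faux canard branches if you went on to use them.
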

\begin{proof}
The time transformation $ \mu_d  \phi'(\hat{y})  d\hat{t} = dt$
allows to rewrite  system \cref{eq:reducedproblem_originaltime} as the {\it desingularized problem}
\be \label{eq:reducedproblem_short} 
\begin{aligned}
\dot{\hat{y}} &=  -\cos \theta,\\
\dot{\theta} &= \mu_d  \phi'(\hat{y}) ,
\end{aligned}\ee
in the new time $\hat{t}$. The difference between system  \cref{eq:reducedproblem_originaltime}  and \cref{eq:reducedproblem_short} is that $\hat{t}$ reverses the direction of time  within $C_r^\pm$.
Problem \cref{eq:reducedproblem_short} has four fixed points in $\mathbb{R}^2\times\mathbb{T}^1$.  The  points $(\delta, 3\pi/2)$ and $(-\delta,\pi/2)$ are hyperbolic saddles with eigenvalues $\pm\sqrt{\mu_d\lvert  \phi'' (\delta)  \rvert  }$, and eigenvectors respectively $[1, \mp\sqrt{\mu_d\lvert  \phi'' (\delta)  \rvert }]^T$ and $[1,\pm \sqrt{\mu_d\lvert  \phi'' (\delta)  \rvert }]^T$.  The remaining points $(\delta,\pi/2)$ and $(-\delta,3\pi/2)$ are centers  with eigenvalues $\pm i \sqrt{\mu_d\lvert  \phi'' (\delta)  \rvert }$, and eigenvectors $[1,\pm i\sqrt{\mu_d\lvert  \phi'' (\delta)  \rvert } ]^T$  and $[1,\mp i \sqrt{\mu_d\lvert  \phi'' (\delta)  \rvert } ]^T$ respectively. 
The inversion of  the time  direction on $C_r^\pm$ gives the dynamics of the reduced problem \cref{eq:reducedproblem_originaltime}. Thus a saddle in \cref{eq:reducedproblem_short} is a folded saddle in \cref{eq:reducedproblem_originaltime}, similarly for the centers. Also, $f^\pm$ become lines of singularities with the time inversion, and the segments  $\hat{I}^\pm$ have forward trajectories pointing inside both $C_a$ and  $C_r^\pm$, compare with \cref{fig:reduced_canard}. Since  $\theta'=1$, orbits reach or leave  $f^\pm$  in finite time.  \end{proof}
 \begin{figure}[t!]
\centering
\begin{subfigure}{.48\textwidth}\caption{ }\label{fig:reduced_canard}
\centering
\includegraphics[width=0.85\textwidth]{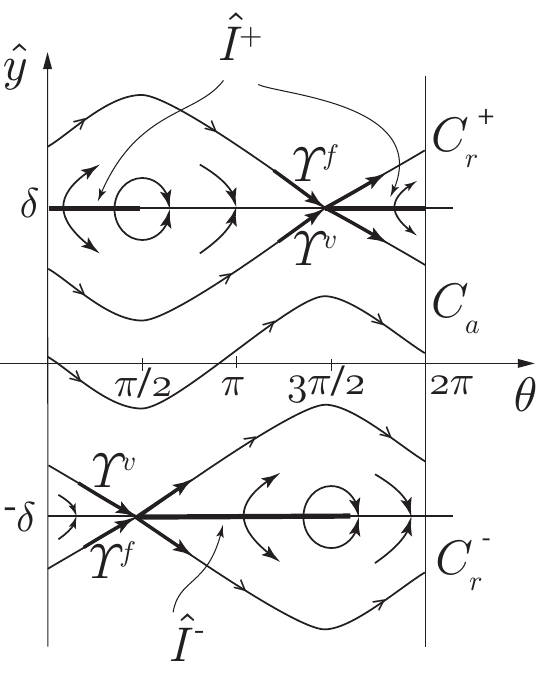}\end{subfigure}
\begin{subfigure}{.48\textwidth}\caption{ }\label{fig:repelling_slow_manifold}
\centering
\includegraphics[width=\textwidth]{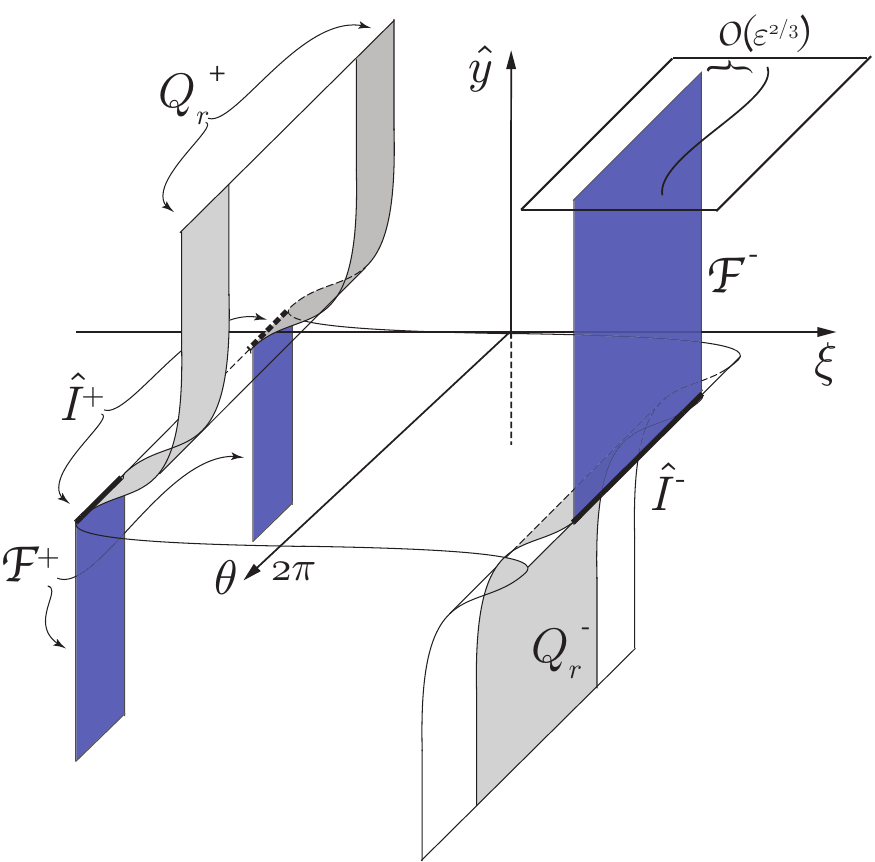}\end{subfigure}
\caption{\subref{fig:reduced_canard}: Phase space of the reduced problem \cref{eq:reducedproblem_originaltime}.   \subref{fig:repelling_slow_manifold}: repelling invariant manifolds $Q_r^\pm$ in grey, and foliations $\mathcal{F}^\pm$ in blue. 
}\label{fig:regularized}
 \end{figure}
\Cref{fig:regularized} illustrates the results of \cref{prop:reduced_problem}. In the $(x,y,\theta)$ coordinates, the segments $\hat{I}^\pm$ collapse onto the lines of non-uniqueness $I^\pm$ for $\vare=0$.  The   layer   problem \cref{eq:layer_problem}  adds a  further forward solution  in $\hat{I}^\pm$, since orbits may also leave a point of these lines by following a fast fiber for $\hat{y}\gtrless0$.   \\Each folded saddle has two special solutions: the {\it singular vrai canard} $\Upsilon^v$ that connects  $C_a$ to $C_r^\pm$, and the {\it singular faux canard} $\Upsilon^f$ that  does the opposite \cite{benoit1981a,dumortier1996a}. The vrai canard separates two different types of forward dynamics:  on one side of $\Upsilon^v$  orbits {\it turn}, that means  they remain on $C_a$. On the other side of $\Upsilon^v$ orbits  reach $f^\pm \setminus \hat{I}^\pm$ and then  {\it jump}, that is, they  move away from $C_0$ by  following  a fast fiber. Each singular canard is a periodic orbit that visits both $C_a$ and $C_r^\pm$, see \cref{fig:reduced_canard}.  The folded centers have no canard solutions \cite{krupa2010a} and for this reason they are not interesting for the analysis.
 Canards are a generic feature of systems with two slow and one fast variable. They appear for instance in    the Van der Pol oscillator \cite{guckenheimer2005a,vo2015a}, in a model for global warming \cite{wieczorek2011a} and in a model for transonic wind \cite{carter2017}. \\
When $0<\vare\ll1$ the singular  vrai canard  $\Upsilon^v$  perturbs into a maximal canard \cite{szmolyan2001a}. This orbit corresponds to the  intersection of  $S_{a,\vare}$ with  $S_{r,\vare}^\pm$. Hence the maximal canard remains  $\mathcal{O}(\vare)$-close to  $S_{r}^\pm$ for a time $t=\mathcal{O}(1)$.
\begin{figure}[t!]
\centering
\begin{subfigure}{.4\textwidth}\caption{ }\label{fig:nonuniqueness_regularized}\centering
\includegraphics[width=0.75\textwidth]{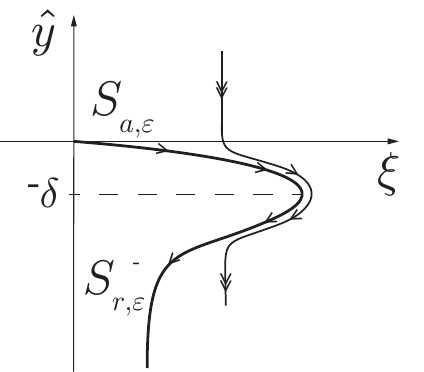}\end{subfigure}
\begin{subfigure}{.4\textwidth}\caption{ }\label{fig:nonuniqueness_perturbation}\centering
\includegraphics[width=0.75\textwidth]{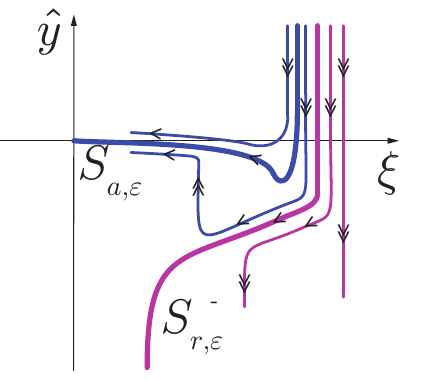}\end{subfigure}
\begin{subfigure}{.4\textwidth}\caption{ }\label{fig:canard_singular}\centering
\includegraphics[width=\textwidth]{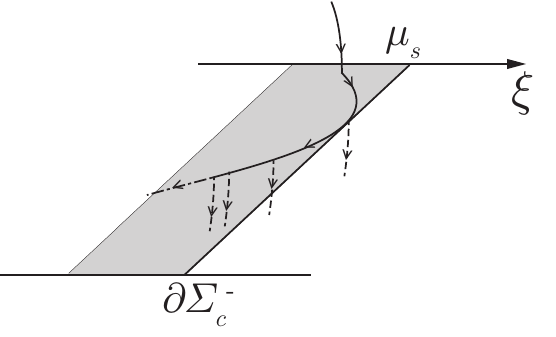}\end{subfigure}
\begin{subfigure}{.4\textwidth}\caption{ }\label{fig:nonuniqueness_singular}\centering
\includegraphics[width=\textwidth]{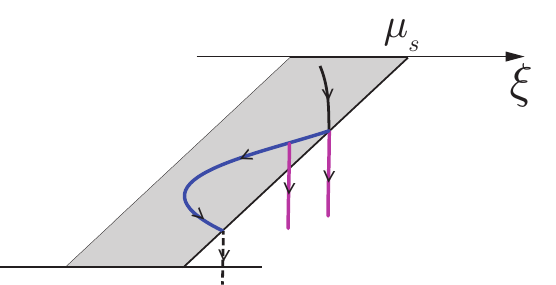}\end{subfigure}
\caption{\subref{fig:nonuniqueness_regularized}: A canard orbit at the intersection of $S_{a,\vare}$ with $S_{r,\vare}^-$. \subref{fig:nonuniqueness_perturbation}: Dynamics around a point of  $\hat{I}^-$, for  $0<\vare\ll1$. \subref{fig:canard_singular} and \subref{fig:nonuniqueness_singular}:  The same dynamics of \Cref{fig:nonuniqueness_regularized,fig:nonuniqueness_perturbation} in the $(x,y,\theta)$-coordinates.  The canard-like solutions leaving $\Sigma_{s,\text{stiction}}^-$  resemble  Carath\'eodory solutions of model \cref{eq:model_dimensionless}, compare with \cref{fig:artifacts_solutions}. }\label{fig:canard_limit}
 \end{figure}  
 Furthermore  a  family of orbits  remains exponentially close to the maximal canard for some time,  before being repelled  from $S_{r,\vare}^\pm$  \cite[p.~200]{kuehn2015a}. An orbit of this family is called a {\it canard} and \cref{fig:nonuniqueness_regularized} shows an example  of it. 
Define $Q_r^\pm$ as the subsets of  $C_r^\pm$ whose solutions, when flowed backwards in time,  intersect  the intervals of non-uniqueness $\hat{I}^\pm$. $Q_r^\pm$ are coloured in grey in \cref{fig:repelling_slow_manifold}. The lines $\hat{I}^\pm$ are, backwards in time, the base of a foliation of fast fibers $\mathcal{F}^\pm$, that are coloured in blue in \cref{fig:repelling_slow_manifold}. The following Proposition describes the role of  the repelling manifolds $Q_r^\pm$ for $0<\vare\ll1$. 
\begin{proposition}
For $0<\vare\ll1$  compact subsets  $S_{r}^\pm$ of $ Q_r^\pm$  perturb into the sets $S_{r,\vare}^\pm$ that are $\mathcal{O}(\vare)$-close to  $S_{r}^\pm$. The slow problem on $S_{r,\vare}^\pm$ is connected  backwards in time to a family of fast trajectories $\mathcal{F}_\vare^\pm$ that is $\mathcal{O}(\vare^{2/3})$-close to $\mathcal{F}^\pm$.    The orbits on $\mathcal{F}_\vare^\pm$ and $S_{r,\vare}^\pm$ separate  the trajectories that, after possibly having been exponentially close to $S_{r,\vare}^\pm$, are attracted  to the slow manifold $S_{a,\vare}$  to the ones that  follow a fast trajectory away from the slow surface.
\end{proposition}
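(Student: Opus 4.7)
The plan is to combine Fenichel's invariant manifold theorem on the hyperbolic parts of $C_r^\pm$ with the fold-point blow-up analysis of Szmolyan and Wechselberger to build the repelling slow manifold and continue it through the fold, then deduce the separatrix property from the normal hyperbolicity of the resulting invariant surface.

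First, I would fix a compact subset $S_r^\pm \subset Q_r^\pm$ whose closure stays bounded away from the folded saddles and centers of \cref{prop:reduced_problem}. On this piece $\phi'(\hat{y}) \neq 0$, and the transverse eigenvalue $\partial_{\hat{y}} \dot{\hat{y}}\rvert_{C_0} = -\mu_d \phi'(\hat{y}^{C_0})$ is positive, so $S_r^\pm$ is normally hyperbolic with a one-dimensional unstable fast direction. A direct application of Fenichel's theorems \cite{fenichel1979a,jones1995a} to the fast problem \cref{eq:fast_problem} then yields, for $0<\vare\leq \vare_0$, a locally invariant $C^r$ slow manifold $S_{r,\vare}^\pm$ that is $\mathcal{O}(\vare)$-close to $S_r^\pm$ in $C^r$, together with its unstable fast foliation.

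Second, to continue $S_{r,\vare}^\pm$ backwards in time through a neighborhood of the fold line $f^\pm$ whose base in $\hat{I}^\pm$ is away from the folded singularities, I would invoke the generic jump-point analysis of \cite{szmolyan2004a}. Along $\hat{I}^\pm$ one has $\phi''(\pm\delta)\neq 0$ from \cref{eq:regularization} and, by \cref{prop:reduced_problem}, the reduced flow crosses $\hat{I}^\pm$ transversely with nonzero $\theta$-velocity, so the hypotheses of the standard fold theorem hold uniformly in $\theta$ along compact subintervals of $\hat{I}^\pm$. The rescaling intrinsic to the blow-up (with $\hat{y}\mp\delta = \vare^{1/3}\tilde{y}$ and $\xi\mp\mu_s = \vare^{2/3}\tilde{\xi}$) then shows that the backward continuation of $S_{r,\vare}^\pm$ leaves a $\mathcal{O}(\vare^{1/3})$ neighborhood of $f^\pm$ tangent to the fast direction, producing a family of almost-fast trajectories $\mathcal{F}_\vare^\pm$ that is $\mathcal{O}(\vare^{2/3})$-close to the singular foliation $\mathcal{F}^\pm$. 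A matching argument between the Fenichel slice (of order $\mathcal{O}(\vare)$) and the fold slice (of order $\mathcal{O}(\vare^{2/3})$) glues the two pieces into a single invariant surface.

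Third, the separatrix claim follows from the codimension-one repelling character of $S_{r,\vare}^\pm \cup \mathcal{F}_\vare^\pm$ in the three-dimensional phase space. By Fenichel, this surface possesses a one-dimensional unstable fast foliation along which nearby trajectories are repelled exponentially; by invariance, an orbit cannot cross it. An initial condition on the $\hat{y}>\delta^+$ side of $S_{r,\vare}^+$ (respectively on the $\hat{y}<-\delta^-$ side for the minus case) must follow a fast fiber away from $C_0$, whereas an initial condition on the opposite side is attracted, along a fast fiber, to $S_{a,\vare}$. The canard phenomenon \cite{szmolyan2001a} accounts for the phrase \emph{after possibly having been exponentially close}: trajectories with initial data exponentially close to $S_{r,\vare}^\pm$ shadow it for $\mathcal{O}(1)$ time before the exponentially small gap is amplified enough to push them onto one of the two sides.

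The main obstacle will be the uniform matching of the Fenichel estimate on $S_r^\pm$ with the $\vare^{2/3}$ blow-up estimate all along the open interval $\hat{I}^\pm$: the constants in \cite[Theorem~1]{szmolyan2004a} must be controlled uniformly in $\theta$ as one approaches the folded saddle and center endpoints, which forces the compact subset $S_r^\pm$ to shrink in a controlled $\theta$-dependent way. Verifying this uniformity, and ensuring that the separatrix surface $S_{r,\vare}^\pm \cup \mathcal{F}_\vare^\pm$ is glued in a $C^0$ (indeed $C^r$ for $r<\infty$) fashion across the fold, is the core technical step.
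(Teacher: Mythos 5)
Your argument is correct and rests on exactly the same two pillars the paper uses: Fenichel's theorem for the normally hyperbolic portion of $Q_r^\pm$, and the Szmolyan--Wechselberger fold/blow-up analysis for the passage through $\hat{I}^\pm$ yielding the $\mathcal{O}(\vare^{2/3})$ estimate on $\mathcal{F}_\vare^\pm$. The only difference is organizational: the paper reverses the time direction so that $C_r^\pm$ becomes attracting and the fold becomes a jump point, and then simply cites \cref{prop:regular_stiction_solution} (which already bundles the Fenichel and fold estimates) rather than re-deriving them; you instead apply Fenichel directly to the repelling branch and then run the fold blow-up backwards, which is an unpacked version of the same argument. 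Your closing concern about uniformity of the constants near the folded saddle/center endpoints is real but is already handled by taking $S_r^\pm$ to be a \emph{compact} subset of $Q_r^\pm$ bounded away from the folded singularities, as you indeed do at the outset.
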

\begin{proof}
By reversing the time orientation on the slow \cref{eq:slow_problem} and fast problem \cref{eq:fast_problem},  the orbits on $Q_r^\pm$ satisfy the assumptions of \cref{prop:regular_stiction_solution}. Hence the distance of $\mathcal{F}^\pm$ to $\mathcal{F}_\vare^\pm$ is  $\mathcal{O}(\vare^{2/3})$. Now consider again the true time direction, and take a set of initial conditions that is exponentially close to the fibers  $\mathcal{F}_\vare^\pm$. These orbits will follow the  repelling slow manifolds $S_{r,\vare}^\pm$ for a time $t= \mathcal{O}(1)$ \cite{szmolyan2001a}. The manifolds $S_{r,\vare}^\pm$ act as separators of two different futures: on one side the orbits will get attracted to the slow attracting manifold $S_{a,\vare}$, while on the other side they will jump away by following an escaping fast fiber, compare with \cref{fig:nonuniqueness_perturbation}. 
\end{proof}
It follows that around $\hat{I}^\pm$ and $\mathcal{F}^\pm$ there is  a high sensitivity to the initial conditions. Even though the $(x,\theta)$-dynamics on $C_{a}$ coincides with the one on $C_{r}^\pm$, trajectories close to  these two manifolds may have different futures. Orbits belonging to $S_{a,\vare}$ will  exit $S_{a,\vare}$  in a predictable point. On the other hand, the orbits that follow  $S_{r,\vare}^\pm$ are very sensitive, and may   escape from it at any time.  These two types of trajectories are coloured respectively in blue and magenta in \cref{fig:nonuniqueness_perturbation,fig:nonuniqueness_singular}.   The orbits that  follow  $S_{r,\vare}^\pm$ for some time are canard-like in the forward behaviour. However  in backward time they are connected to a family of fast fibers instead than to $S_{a,\vare}$ and for this reason they are not typical canards like $\Upsilon^{v}$.\\ 
In the  original coordinates $(x,y,\theta)$,   the canard trajectories of the folded saddles and  the canard-like solutions of the lines $\hat{I}^\pm$  leave the slow manifold in a point inside $\Sigma_{s,stiction}^\pm$, as in \cref{fig:canard_singular,fig:nonuniqueness_singular}.  In the piecewise smooth system these orbits satisfy the  Carath\'eodory condition \cref{eq:caratheodory} but they are not stiction solutions.  
It follows that some of the  Carath\'eodory solutions of \cref{eq:model_dimensionless} appear upon regularization of the stiction model: these are the trajectories of $Z_s$ that  intersect   $I^\pm$  backwards in time.  All the other Carath\'edory solutions of model \cref{eq:model_dimensionless} do not have a corresponding solution in the regularized model.
The interpretation of the solutions with canard is that   the slip onset is delayed with respect to the time when  the external forces have equalled the maximum static friction force. \Cref{fig:slip_stick_pos_projection} in \cref{sec:pos_regularized}, will show a numerical solution having this delay.

\section{Slip-stick periodic orbits}\label{sec:slipstickorbits} This section considers  a family of periodic orbits of model \cref{eq:model_dimensionless} that interacts with the lines of non-uniqueness $I^\pm$. Then \cref{sec:pos_regularized} discusses how the family perturbs in the regularized system \cref{eq:vectorfield_regularized} for $0<\vare\ll1$, by combining numerics and analysis. \\
Model \cref{eq:model_dimensionless} has several kinds of periodic motion: pure slip \cite{shaw1986a,csernak2006a}, pure stick \cite{hinrichs1998a}, non-symmetric slip-stick \cite{olsson2001a,galvanetto1999a,andreaus2001a,oestreich1996a,popp1990a}, symmetric slip-stick \cite{olsson2001a,hinrichs1998a}. 
This section focuses on the latter, as slip-stick orbits are likely to be affected by the non-uniqueness at $I^\pm$. \Cref{fig:phasespace_3D_slipstick} shows an example of such a trajectory.  The symmetric slip-stick trajectories can  be found by solving a system of algebraic equations, because   system \cref{eq:model_dimensionless}, in its non-autonomous form, is piecewise-linear in each region. Furthermore, it is sufficient to study only half the period, as ensured by the  following two \cref{lem:symmetry,lem:symmetric_orbit}.
\begin{lemma}\label{lem:symmetry}
System \cref{eq:model_dimensionless} has a symmetry
\be \label{eq:symmetry} S(x,y,\theta) = (-x, -y, \theta+\pi).\ee
\end{lemma}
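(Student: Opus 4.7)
The plan is to verify directly that the involution $S\colon(x,y,\theta)\mapsto(-x,-y,\theta+\pi)$ maps solutions of \cref{eq:model_dimensionless} to solutions. Equivalently, letting $(\tilde x,\tilde y,\tilde\theta):=S(x,y,\theta)$, I will check that the pushforward vector field coincides with the original one when evaluated at $(\tilde x,\tilde y,\tilde\theta)$. Since $S$ is linear in $(x,y)$, this reduces to three scalar checks, one per component. The first and third components are immediate: $\tilde x'=-x'=-y=\tilde y$ and $\tilde\theta'=\theta'=1$.

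The content sits in the $y$-equation, so I would first record the auxiliary identity
\[
\xi(-x,\theta+\pi)=\gamma^2(-x)+\sin(\theta+\pi)=-\gamma^2 x-\sin\theta=-\xi(x,\theta),
\]
using oddness of $\sin$ under the shift by $\pi$. This reduces the claim $\tilde y'=-\xi(\tilde x,\tilde\theta)+\mu(\tilde y,\xi(\tilde x,\tilde\theta))$ to
\[
-y'=\xi(x,\theta)+\mu(-y,-\xi(x,\theta)),
\]
which, together with the original $y$-equation, is equivalent to the oddness property
\[
\mu(-y,-\xi)=-\mu(y,\xi).
\]

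The main (very small) obstacle is that $\mu$ is piecewise defined, so I would verify oddness branch by branch from \cref{eq:stiction_dimensionless}: for $y\ne 0$, $\mu(-y,-\xi)=-\mu_d\sign(-y)=-(-\mu_d\sign y)=-\mu(y,\xi)$; for $y=0$ and $|\xi|<\mu_s$, $\mu(0,-\xi)=-\xi=-\mu(0,\xi)$; and for $y=0$ and $|\xi|>\mu_s$, $\mu(0,-\xi)=\mu_s\sign(-\xi)=-\mu_s\sign\xi=-\mu(0,\xi)$. Substituting this back yields $\tilde y'=-\xi(\tilde x,\tilde\theta)+\mu(\tilde y,\xi(\tilde x,\tilde\theta))$, so all three equations of \cref{eq:model_dimensionless} are preserved by $S$, establishing the symmetry. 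The undefined lines $\{y=0,\;|\xi|=\mu_s\}$ are themselves mapped to one another by $S$, so the statement is not sensitive to the modelling choice there.
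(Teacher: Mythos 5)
Your proof is correct and follows the same route as the paper's: both verify the symmetry condition $Z(S(z)) = DS(z)\,Z(z)$. The paper simply states that this condition holds (citing Meiss), while you spell out the computation, including the key oddness identity $\mu(-y,-\xi)=-\mu(y,\xi)$ checked branch by branch and the observation that the undefined set $\{y=0,\lvert\xi\rvert=\mu_s\}$ is $S$-invariant, which is exactly the content the paper leaves to the reader.
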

\begin{proof} The map \cref{eq:symmetry} is a diffeomorphism $\mathbb{R}^2\times\mathbb{T}^1 \to \mathbb{R}^2\times\mathbb{T}^1$ that satisfies the condition for a symmetry $ Z(S(z)) = DS(z) Z(z)$, where $DS(z)$ is the Jacobian of $S(z)$ and $z = (x,y,\theta)$  \cite[p.~211]{meiss2007a}. 
\end{proof}
\begin{lemma}\label{lem:symmetric_orbit}
Let $\varphi_{t}(z)$ be the regular stiction orbit of system \cref{eq:model_dimensionless} at time $t$,  with initial condition $z = (x,y,\theta)$.  If $\varphi_{ \pi}(z) = (-x,-y,\theta+\pi)$ then the orbit is symmetric and periodic with period ${T} = 2\pi$.
\end{lemma}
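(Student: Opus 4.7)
The plan is to leverage the symmetry $S$ from \cref{lem:symmetry} together with forward uniqueness of regular stiction solutions to obtain both the $2\pi$-periodicity and the $S$-invariance of the orbit as a set.

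First I would check that $S$ is an involution: $S\circ S(x,y,\theta) = S(-x,-y,\theta+\pi) = (x,y,\theta+2\pi) = (x,y,\theta)$ in $\mathbb{R}^2\times\mathbb{T}^1$. Next, from the symmetry identity $Z\circ S = DS\cdot Z$ established in \cref{lem:symmetry}, the flow $\varphi_t$ is equivariant, i.e.\ $S(\varphi_t(z)) = \varphi_t(S(z))$ wherever it is well-defined. Since the hypothesis concerns a \emph{regular} stiction solution (hence forward unique by \cref{prop:existence_uniqueness}), equivariance transfers without ambiguity: both $t\mapsto S(\varphi_t(z))$ and $t\mapsto \varphi_t(S(z))$ are regular stiction solutions of the same vector field sharing the initial condition $S(z)$, and by forward uniqueness they must coincide on $[0,\infty)$.

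With equivariance in hand, the periodicity is a one-line consequence of the hypothesis. Indeed, using $\varphi_\pi(z) = S(z)$ and the semigroup property,
\begin{equation*}
\varphi_{2\pi}(z) \;=\; \varphi_\pi(\varphi_\pi(z)) \;=\; \varphi_\pi(S(z)) \;=\; S(\varphi_\pi(z)) \;=\; S(S(z)) \;=\; z,
\end{equation*}
so $\varphi_t(z)$ is periodic with period (dividing) $2\pi$. For the $S$-symmetry of the orbit as a set, for every $t\geq 0$,
\begin{equation*}
S(\varphi_t(z)) \;=\; \varphi_t(S(z)) \;=\; \varphi_t(\varphi_\pi(z)) \;=\; \varphi_{t+\pi}(z),
\end{equation*}
so the $\pi$-shift along the orbit coincides with the action of $S$, which is precisely the statement that the closed curve $\{\varphi_t(z):t\in[0,2\pi]\}$ is invariant under $S$.

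The only subtle point is justifying that the equivariance identity $S\circ\varphi_t = \varphi_t\circ S$ actually applies in our discontinuous, non-Filippov setting; this is where the regularity assumption on the stiction solution is essential. I would remark that the three smooth constituent fields $Z^\pm$ and $Z_s$ are each individually $S$-equivariant, and that $S$ maps the boundary structures $\Sigma_s$, $\partial\Sigma_c^\pm$, $I^\pm$ to themselves (with $I^+\leftrightarrow I^-$ interchanged), so the piecing-together prescription used to construct stiction solutions commutes with $S$. Forward uniqueness on regular orbits then guarantees that no choice ambiguity spoils the equivariance, closing the proof.
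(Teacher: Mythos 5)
Your proof is correct and follows the same route as the paper, merely filling in details the paper leaves implicit: use $S$-equivariance of the flow plus forward uniqueness of regular stiction solutions to get $S\circ\varphi_t = \varphi_t\circ S$, then compose with the hypothesis $\varphi_\pi(z)=S(z)$ and the involutivity of $S$ to obtain $\varphi_{2\pi}(z)=z$. One small imprecision in your closing remark: the fields $Z^\pm$ are \emph{not} each individually $S$-equivariant; rather $S$ maps $G^+$ to $G^-$ and swaps them, so that $Z^+\circ S = DS\cdot Z^-$ and $Z^-\circ S = DS\cdot Z^+$ (only $Z_s$ is genuinely fixed by $S$). The correct justification that the piecing-together commutes with $S$ is that $S$ permutes the ordered pairs $(G^+,Z^+)\leftrightarrow(G^-,Z^-)$, fixes $(\Sigma_s,Z_s)$, and interchanges $\partial\Sigma_c^+\leftrightarrow\partial\Sigma_c^-$ and $I^+\leftrightarrow I^-$; the conclusion you draw from it, and hence the rest of the argument, is unaffected.
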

\begin{proof}
Applying the symmetry map \cref{eq:symmetry}  to the point $\varphi_{ \pi}(z)$,  gives
\[ S(-x,-y,\theta+\pi) = (x, y, \theta+2\pi). \]
Since $Z(x, y, \theta+2\pi) \equiv Z(x, y, \theta)$ for any $\theta \in \mathbb{T}^1$,  the flow $\varphi_{t}(z)$ is symmetric and periodic, with symmetry \cref{eq:symmetry} and period ${T} = 2\pi$.
\end{proof}  
 \begin{figure}[t!]
 \centering
\includegraphics[width=0.7\textwidth]{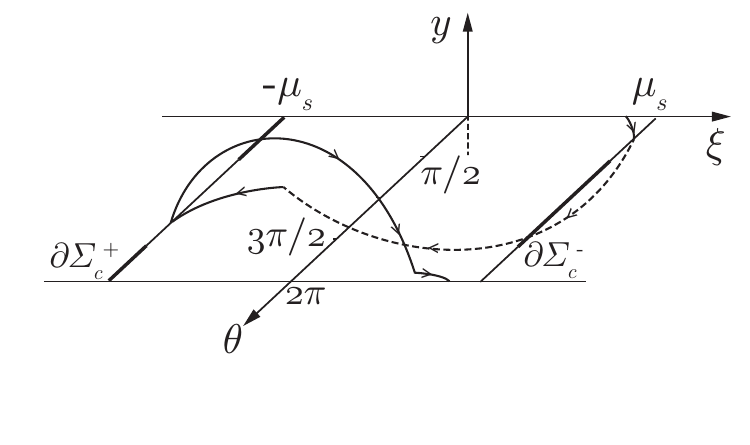}
\caption{A symmetric, slip-stick, periodic orbit with $\theta\in\mathbb{T}^1$. The dashed line represents trajectories in $Z^-$. The interest is to study how such orbit interacts with the intervals of non-uniqueness $I^\pm$ (in bold) under variation of a parameter. }\label{fig:phasespace_3D_slipstick}
 \end{figure}
 The  results of \cref{lem:symmetric_orbit} have been used in \cite{shaw1986a} even though the symmetry was not made explicit. Define $\varphi_{t}^\text{slip}(z_0)$ (resp. $\varphi_{t}^\text{stick}(z_1)$) the slip (stick) solution  of  $Z^-(z)$ ($Z_s(z)$) with initial conditions in $z_0$ ($z_1$).   The following Lemma states when these two solutions, pieced together, belong to a symmetric slip-stick periodic orbit.
 \begin{lemma}\label{lem:slipstick}
Necessary conditions for the slip and stick solutions $\varphi_{t}^\text{slip}(z_0)$ and  $\varphi_{t}^\text{stick}(z_1)$ to form the lower half of  a symmetric, slip-stick, periodic orbit are
\begin{subequations}\label{eq:slipstick_cond}
\begin{flalign}
\varphi_{\pi-\theta^*}^\text{slip}(z_0) &= \varphi_{0}^\text{stick}(z_1), \label{eq:goodconnection}\\
\varphi_{ \theta^*}^\text{stick}(z_1) &= S(z_0). \label{eq:goodsymmetry}
\end{flalign}\end{subequations}
where $ 0<\theta^*<\pi$  is the duration of one stick phase and $z_0 \in \partial \Sigma_c^-$, $z_1 \in \Sigma_s$.
\end{lemma}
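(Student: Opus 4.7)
The plan is to read off the two conditions directly from the geometry of a symmetric slip–stick orbit, using Lemma 5.2 as the only nontrivial input. Since the orbit is slip–stick and symmetric under $S$, the lower half-period decomposes into exactly one slip segment followed by one stick segment. The slip segment is governed by $Z^-$ and lives in (the closure of) $G^-$; it must begin at a point $z_0 \in \partial \Sigma_c^-$ because this is the only part of the boundary of $G^-$ on which $Z^-$ points inwards (recall that on $\partial \Sigma_c^-$ we have $\xi = \mu_s$, so $y' = -\mu_s+\mu_d<0$). It must terminate when the trajectory first returns to $\Sigma$, and by the definition of the orbit this landing point $z_1$ lies in $\Sigma_s$.

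I would denote the duration of the slip segment by $\tau_1$ and the duration of the following stick segment by $\theta^*$. Because the total duration of the half-orbit equals $\pi$, one has $\tau_1+\theta^*=\pi$, and the very fact that slip leads into stick at $z_1$ reads
\[
\varphi_{\pi-\theta^*}^{\text{slip}}(z_0)=z_1=\varphi_0^{\text{stick}}(z_1),
\]
which is exactly \cref{eq:goodconnection}.

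For the second condition, I would invoke Lemma 5.2: a symmetric $2\pi$-periodic orbit through $z_0$ must satisfy $\varphi_\pi(z_0)=S(z_0)$. Flowing $z_0$ for time $\pi$ under the full (piecewise) flow amounts to first applying $\varphi_{\pi-\theta^*}^{\text{slip}}$ to reach $z_1$ and then applying $\varphi_{\theta^*}^{\text{stick}}$, so
\[
\varphi_{\theta^*}^{\text{stick}}(z_1)=\varphi_\pi(z_0)=S(z_0),
\]
which gives \cref{eq:goodsymmetry}.

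The only subtle point is the justification that the lower half of the orbit consists of precisely one slip segment followed by precisely one stick segment (so that the decomposition into $\tau_1+\theta^*=\pi$ is legitimate). This is where I would need to be careful: it follows from the hypothesis that the orbit is a \emph{symmetric slip–stick} orbit, meaning by construction that each half-period contains exactly one slip phase in $G^-$ and one stick phase in $\Sigma_s$; otherwise the symmetry $S$, which flips the sign of $y$, would force additional phases on the upper half and contradict the slip–stick hypothesis. Once this structural fact is in place, the two conditions \cref{eq:slipstick_cond} are immediate.
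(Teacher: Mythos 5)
The paper does not actually give a formal proof of this lemma; it simply remarks after the statement that ``Condition~\cref{eq:goodconnection} guarantees the continuity between the stick and slip phase, while~\cref{eq:goodsymmetry} guarantees the symmetry,'' and that the upper half follows by applying $S$. Your write-up is a correct and reasonable formalization of exactly that remark: you identify the slip segment (started at $\partial\Sigma_c^-$ because $y'=-\mu_s+\mu_d<0$ there under $Z^-$) and stick segment, observe that continuity at the hand-off gives~\cref{eq:goodconnection}, and use the half-period symmetry relation $\varphi_\pi(z_0)=S(z_0)$ to get~\cref{eq:goodsymmetry}.

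One small imprecision worth noting: you say you \emph{invoke} \cref{lem:symmetric_orbit} to conclude $\varphi_\pi(z_0)=S(z_0)$, but that lemma runs in the opposite direction — it asserts that this identity is \emph{sufficient} for a symmetric $2\pi$-periodic orbit, not that every symmetric orbit satisfies it. For the necessary direction you want here, the identity $\varphi_\pi(z_0)=S(z_0)$ should instead be derived from $S$-invariance of the orbit together with $S^2=\mathrm{id}$ and $\varphi_{2\pi}=\mathrm{id}$: there is some $t^*\in[0,2\pi)$ with $\varphi_{t^*}(z_0)=S(z_0)$, applying $S$ and using that it commutes with the flow gives $\varphi_{2t^*}(z_0)=z_0$, so $t^*=\pi$ by minimality of the period. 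This is a two-line fix and the rest of your argument stands as written. Your discussion of why the lower half splits into exactly one slip and one stick phase is also the right thing to flag; the paper treats it as part of the definition of a ``symmetric slip-stick'' orbit, and your remark that the $S$-symmetry would otherwise force extra phases on the upper half is the correct justification.
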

Condition \cref{eq:goodconnection} guarantees the continuity between the stick and slip phase, while \cref{eq:goodsymmetry} guarantees the symmetry. The upper half-period of the orbit follows by applying the symmetry map \cref{eq:symmetry} to $\varphi_{t}^\text{slip}$ and $\varphi_{t}^\text{stick}$.
\begin{corollary}
Conditions \cref{eq:slipstick_cond} are equivalent to 
\begin{subequations} \label{eq:slipstick_equivalent}
\begin{flalign}
x^\text{slip}(\pi - \theta^*) &= -x_0, \label{eq:xislip_connection}\\
y^\text{slip}(\pi - \theta^*) &= 0, \label{eq:yslip_connection}\\
\pi-\theta^* &+\theta_0=  \theta_1 .
\end{flalign}\end{subequations}
Where $z_0 = (x_0,y_0,\theta_0)\in\Sigma_c^-$, $z_1 = (x_1,y_1,\theta_1)\in\Sigma_s$ and   $\varphi_{t}^\text{slip}(z_0)=(x(t),y(t),\theta(t))^{\text{slip}}$.
\end{corollary}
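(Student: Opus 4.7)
The plan is to unpack both conditions \cref{eq:slipstick_cond} componentwise and use the fact that, on $\Sigma_s$, the stick vector field $Z_s$ satisfies $(x',\theta')=(0,1)$ while $y\equiv 0$. This makes the stick flow trivially explicit.

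First I would write, for $z_1=(x_1,y_1,\theta_1)\in\Sigma_s$ (so $y_1=0$), the stick solution as
\[
\varphi_{t}^\text{stick}(z_1) \;=\; (x_1,\, 0,\, \theta_1+t),
\]
and note that $z_0=(x_0,y_0,\theta_0)\in\partial\Sigma_c^-\subset\Sigma$ forces $y_0=0$, so the symmetry map gives $S(z_0)=(-x_0,\,0,\,\theta_0+\pi)$.

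Next I would equate components in \cref{eq:goodconnection}: writing $\varphi_{t}^\text{slip}(z_0)=(x^\text{slip}(t),y^\text{slip}(t),\theta_0+t)$, the identity $\varphi_{\pi-\theta^*}^\text{slip}(z_0)=\varphi_{0}^\text{stick}(z_1)=(x_1,0,\theta_1)$ produces three scalar equalities:
\[
x^\text{slip}(\pi-\theta^*)=x_1,\qquad y^\text{slip}(\pi-\theta^*)=0,\qquad \theta_0+\pi-\theta^*=\theta_1.
\]
The second and third are exactly \cref{eq:yslip_connection} and the third line of \cref{eq:slipstick_equivalent}. Then I would expand \cref{eq:goodsymmetry}: the identity $\varphi_{\theta^*}^\text{stick}(z_1)=(x_1,0,\theta_1+\theta^*)=S(z_0)=(-x_0,0,\theta_0+\pi)$ yields
\[
x_1=-x_0,\qquad 0=0,\qquad \theta_1+\theta^*=\theta_0+\pi,
\]
the last being equivalent to the already-obtained $\theta_1=\theta_0+\pi-\theta^*$. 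Combining $x^\text{slip}(\pi-\theta^*)=x_1=-x_0$ recovers \cref{eq:xislip_connection}.

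Conversely, starting from \cref{eq:slipstick_equivalent} and defining $z_1:=(x^\text{slip}(\pi-\theta^*),0,\theta_0+\pi-\theta^*)$, the three equalities of \cref{eq:slipstick_equivalent} put this $z_1$ into $\Sigma_s$ with $x_1=-x_0$ and $\theta_1=\theta_0+\pi-\theta^*$, from which the explicit stick flow immediately produces both \cref{eq:goodconnection} and \cref{eq:goodsymmetry}. There is no real obstacle here, since no estimates are required: the content is purely the triviality of the stick vector field in $x$, which collapses the vector equations to the scalar system \cref{eq:slipstick_equivalent}. The only point to state carefully is that $y_0=0$ (from $z_0\in\partial\Sigma_c^-$) makes the $y$-component of \cref{eq:goodsymmetry} automatic and consistent with $y_1=0$.
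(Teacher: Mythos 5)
Your proof is correct and follows essentially the same route as the paper: write out the stick flow explicitly as $(x_1,0,\theta_1+t)$, equate components in \cref{eq:goodconnection} and \cref{eq:goodsymmetry}, and combine $x^\text{slip}(\pi-\theta^*)=x_1$ with $x_1=-x_0$ to get \cref{eq:xislip_connection}. Your treatment is a bit more complete than the paper's (you note $y_0=0$ explicitly and spell out the converse direction), but the underlying argument is identical.
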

\begin{proof}
The stick solution of \cref{eq:model_dimensionless} with initial condition $z_1 = (x_1,0,\theta_1)$ is  $(x,y,\theta)^\text{stick}(t) = (x_1,0,t+\theta_1)$.   Condition \cref{eq:goodconnection} then implies that  $x^\text{slip}(\pi - \theta^*) = x_1$ and $y^\text{slip}(\pi - \theta^*) = 0$, while $\theta^\text{slip}(\pi - \theta^*) = \pi-\theta^* + \theta_0 = \theta_1$. Condition \cref{eq:goodsymmetry} adds furthermore that $x_1 = -x_0$. 
 \end{proof}
The stick-slip solutions of \cref{eq:model_dimensionless} are now investigated numerically. The system of  conditions \cref{eq:slipstick_equivalent} has five unknown parameters: $\gamma, \theta_0, \theta^*, \mu_s$ and $\mu_d$. It is reasonable to fix $\mu_s$ and $\mu_d$ as these are related to the material used, and then find a family of solutions of \cref{eq:slipstick_equivalent} by varying the frequency ratio $\gamma = \Omega/\omega$. The values used in the computations are listed in \cref{tab:1}.
   \begin{figure}[th!]
\centering
\begin{subfigure}{.4\textwidth}\caption{ }\label{fig:PWSfamily_gammalarge}
\includegraphics[width=\textwidth]{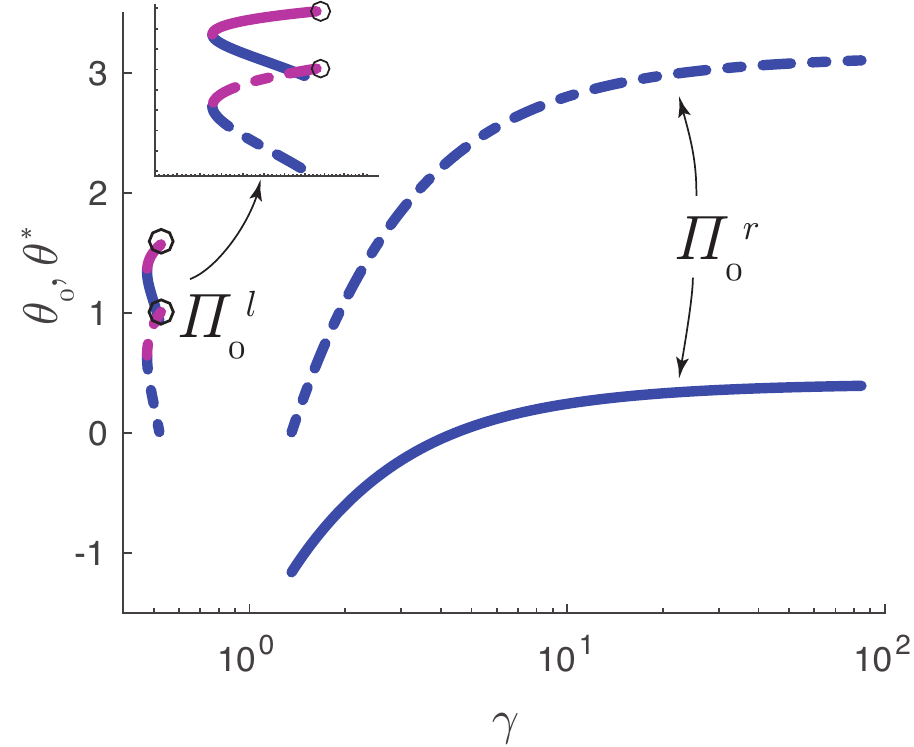}\end{subfigure}
\begin{subfigure}{.4\textwidth}\caption{ }\label{fig:Maxy_gammalarge}
\includegraphics[width=\textwidth]{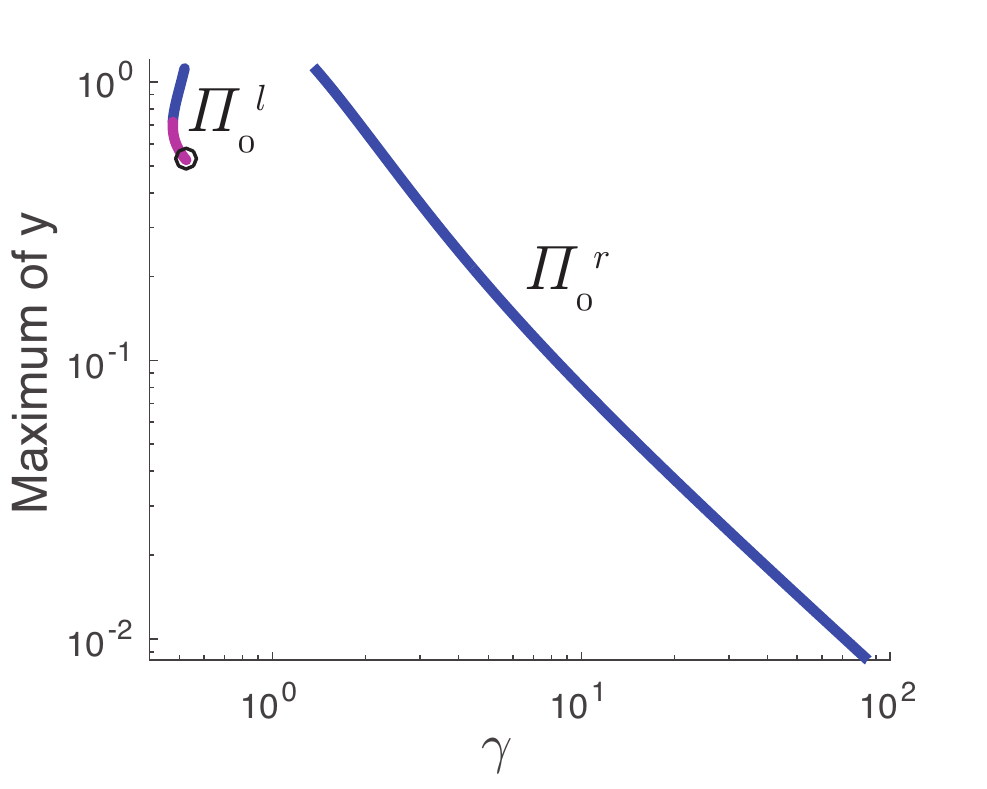}\end{subfigure}
\caption{\subref{fig:PWSfamily_gammalarge}: Two families of slip-stick orbits $\Pi_0^{l,r}$ of \cref{eq:model_dimensionless} for $\mu_s = 1.1$, $\mu_d = 0.4$. The solid line is $\theta_0$ while the dashed line is $\theta^*$. The blue denotes a stable periodic orbit, while the magenta a saddle periodic orbit. \subref{fig:Maxy_gammalarge}:  Maximum  amplitude of the orbits.
}\label{fig:PWSfamily}
 \end{figure}
\begin{table}
\caption{Parameters values used in the simulations.}
\label{tab:1}       
\centering
\begin{tabular}{cccc}
\hline\noalign{\smallskip}
$\mu_s$ & $\mu_d$ & $\vare$ & $\delta$  \\
\noalign{\smallskip}\hline\noalign{\smallskip}
1.1 & 0.4 & $10^{-3}$ & 0.6  \\
\noalign{\smallskip}\hline
\end{tabular}
\end{table}
Notice that conditions  \cref{eq:slipstick_equivalent} are  necessary  but not sufficient:  further admissibility conditions may be needed.  These are conditions that control that each piece of solution does not exit its region of definition, for example: the stick solution should not cross $\partial \Sigma_c^-$ before $t=\theta^*$, and should not cross $\partial \Sigma_c^+$ for any $t\in[0,\theta^*]$.
A numerical computation shows that system \cref{eq:slipstick_equivalent} has two branches of solutions $\Pi_0^{l,r}$, as shown in \cref{fig:PWSfamily}: one for $\gamma<1$ and one for $\gamma>1$. The branches are disconnected around the resonance for $\gamma=1$, where chaotic behaviour may appear \cite{andreaus2001a,csernak2006a,oestreich1996a}. 
The branch $\Pi_0^l$ for  $\gamma<1$ is bounded by pure slip orbits when $\theta^*\to0$, and by the visible tangency on $\Sigma_s$ when $\theta_0\to \pi/2$. The latter is marked with a circle in \cref{fig:PWSfamily_gammalarge}.  The  branch $\Pi_0^r$  for $\gamma>1$  is delimited by pure slip orbits when $\gamma\to 1$ since again $\theta^*\to0$, while when $\gamma\gg1$, that is the rigid body limit,  the family is bounded by $\theta^*\to\pi$. Here periodic orbits have a very short slip phase and an almost $\pi$-long stick phase. \\
A slip-stick orbit  of model \cref{eq:model_dimensionless} has three Floquet multipliers. Of these, one is trivially unitary, the second one is always zero and the last indicates the stability of the periodic orbit. The zero multiplier is due to the interaction of the periodic orbit with the sticking manifold $\Sigma_s$:  solutions lying on this surface are backwards non-unique. \Cref{fig:PWSfamily} denotes in blue the attracting periodic solutions  and in magenta the repelling ones. In particular the family $\Pi_o^l$ becomes    unstable  sufficiently close to the visible tangency at $\theta_0=\pi/2$, which is marked with a circle in \cref{fig:PWSfamily}. 
This is because the visible tangency    
 acts as a separatrix of two very different behaviours: on one side orbits jump, while on the other side they turn, recall \cref{fig:reduced_canard}.  


\subsection{Slip-stick periodic orbits in the regularized system}\label{sec:pos_regularized}
This section finds slip-stick periodic solutions of the regularized model \cref{eq:vectorfield_regularized} with a numerical continuation in AUTO \cite{Doedel2006}.  The solutions are then  compared   with the ones of the discontinuous system \cref{eq:model_dimensionless}. 
The regularization function  used  is a polynomial 
\[\phi(y) = y(ay^6 + by^4 + cy^2 + d),\]
within $y\in[-1,1]$, where the coefficients $a,b,c,d$ are determined by the conditions \cref{eq:regularization} for the parameters  listed in \cref{tab:1}.  Hence  $\phi(y)$ is $C^1$ for $y\in\mathbb{R}$.
\begin{figure}[h!]\centering
\begin{subfigure}{.48\textwidth}\caption{ }\label{fig:AUTOfamily_Maxy}
\includegraphics[width=\textwidth]{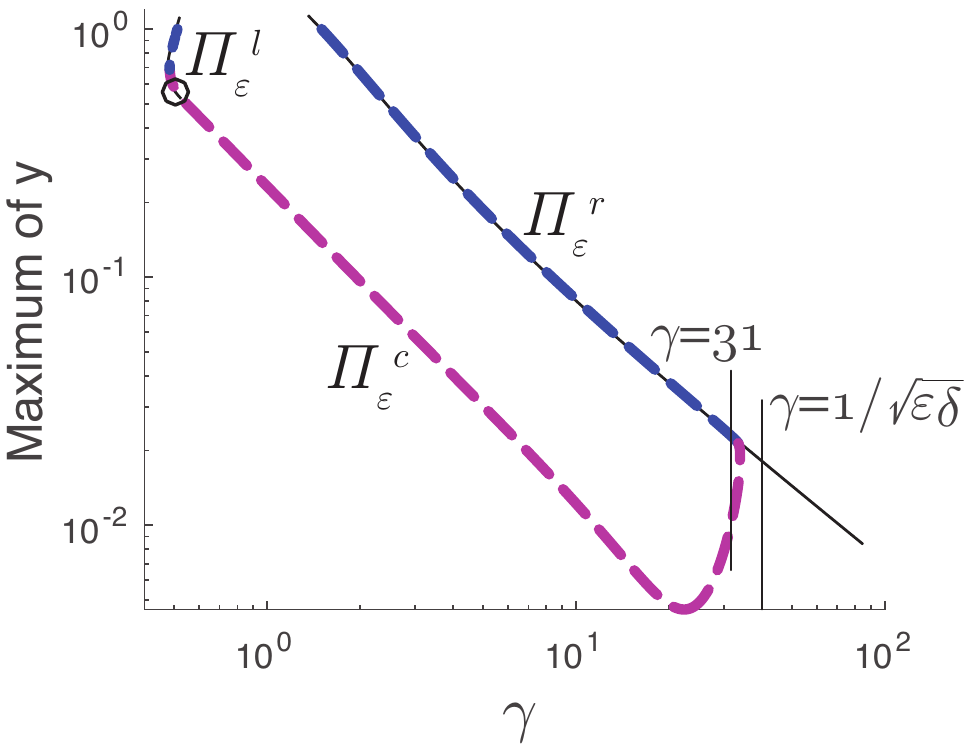}\end{subfigure}
  \begin{subfigure}{.48\textwidth}\caption{ }\centering\label{fig:slip_stick_pos_projection2}
\includegraphics[width=1\textwidth]{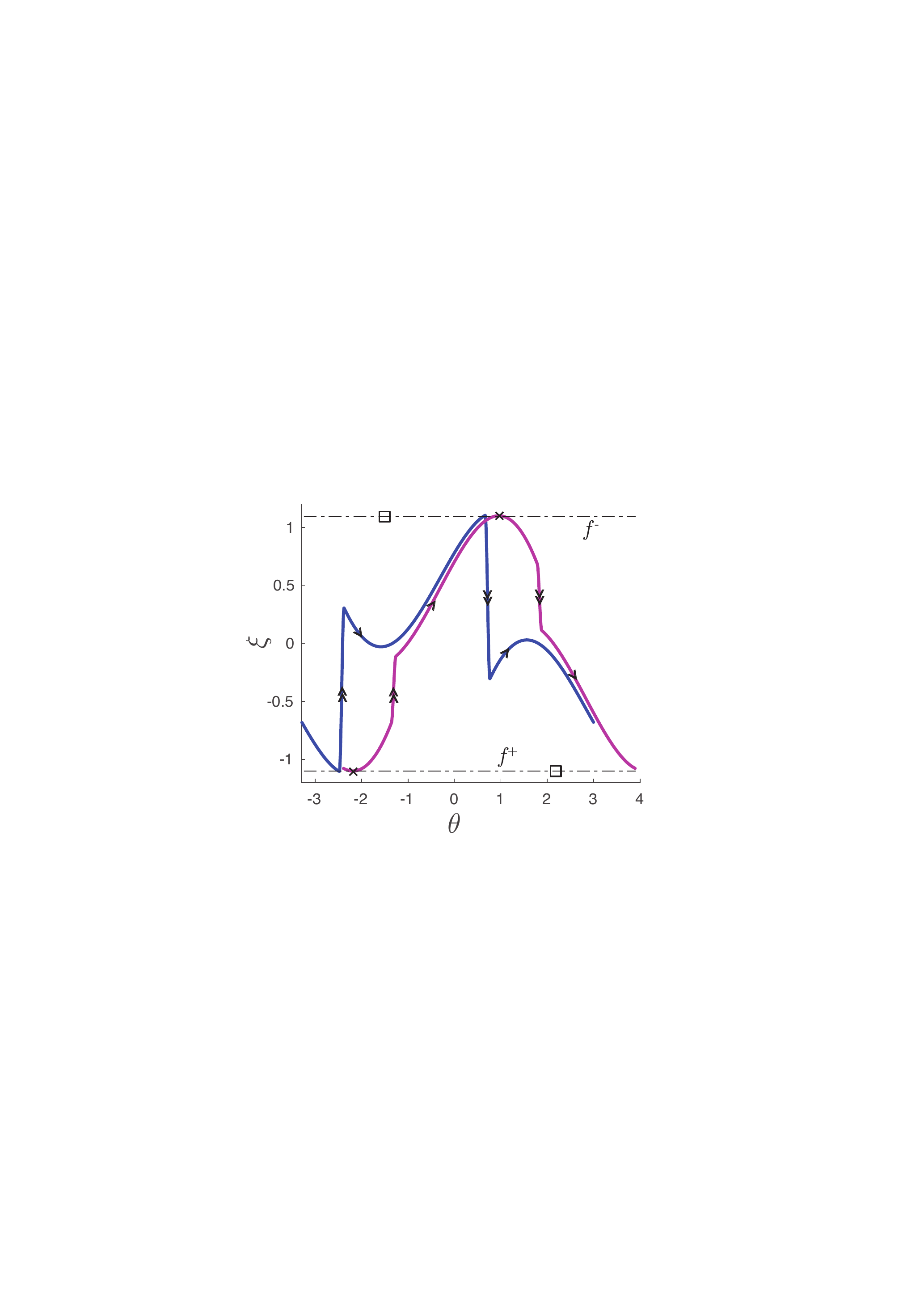}
 \end{subfigure}
  \begin{subfigure}{.48\textwidth}\caption{ }\centering\label{fig:slip_stick_pos_projection}
\includegraphics[width=1\textwidth]{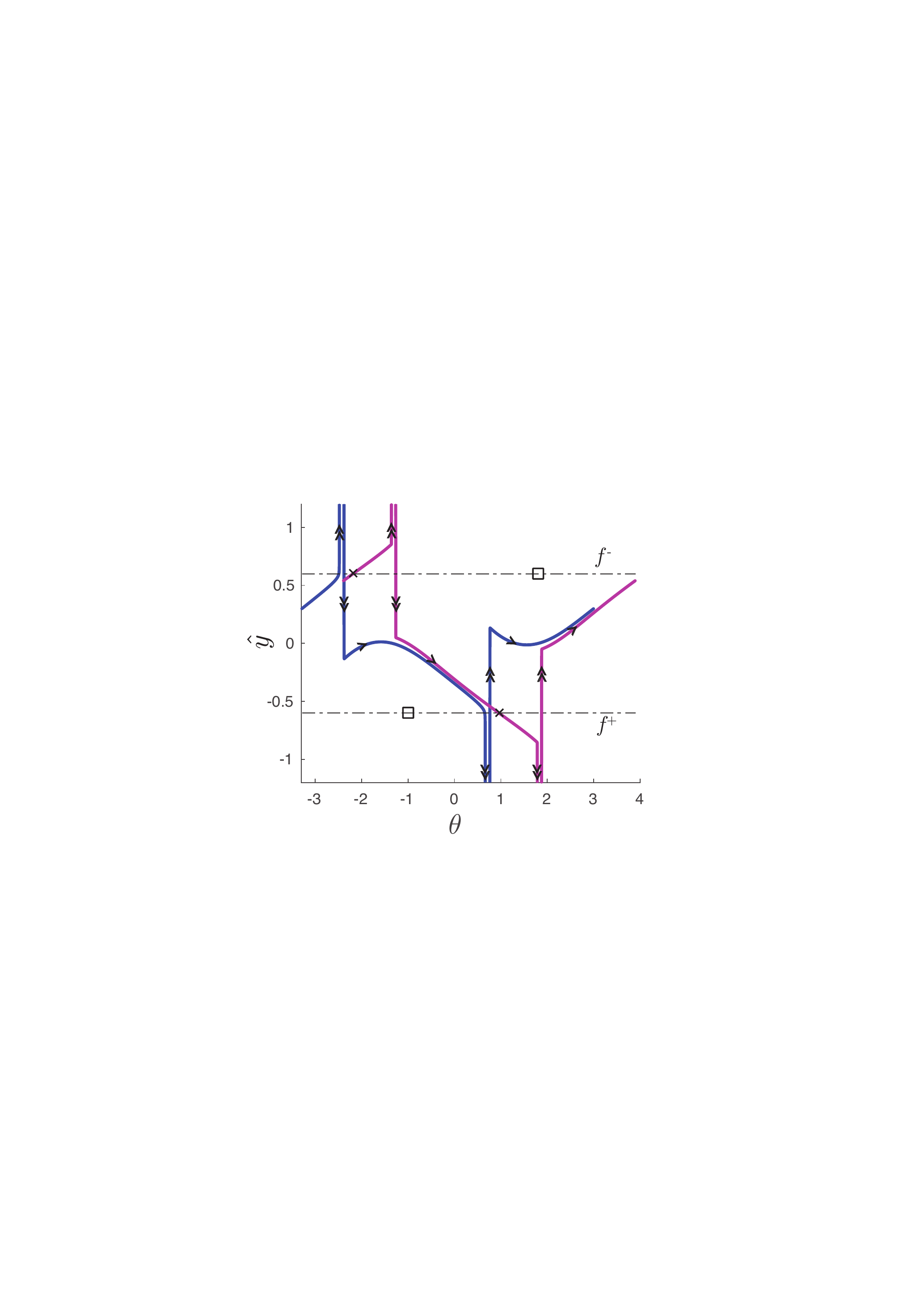}
 \end{subfigure}
 \begin{subfigure}{.48\textwidth}\caption{ }\centering\label{fig:slip_stick_pos_projection3}
\includegraphics[width=1\textwidth]{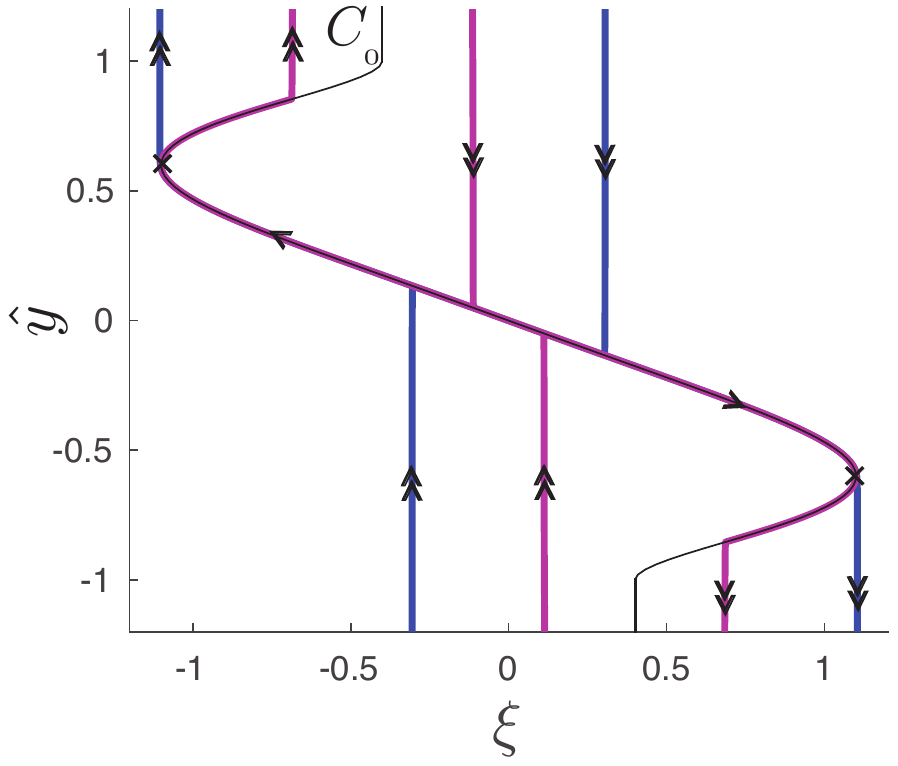}
 \end{subfigure}
\caption{Numerical simulation in AUTO.  \subref{fig:AUTOfamily_Maxy}:  In dashed the family   $\Pi_\vare$. The repelling branch $\Pi_\vare^c$ connects the two regular branches $\Pi_\vare^{l,r}$. Solid line: families $\Pi_0^{l,r}$. The colours denote the stability of the orbits, as in \cref{fig:PWSfamily}. 
\subref{fig:slip_stick_pos_projection2}: Two periodic orbits co-existing for $\gamma=31$: a regular slip-stick in blue and a slip-stick with canard segments in magenta. The {\tt x} marks the folded saddle while the $\square$ denotes the folded node. \subref{fig:slip_stick_pos_projection} and \subref{fig:slip_stick_pos_projection3}: Projections of \subref{fig:slip_stick_pos_projection2} in the $(\theta,\hat{y})$ and $(\xi,\hat{y})$-plane.  }\label{fig:AUTOsimuations}
\end{figure}
\Cref{fig:AUTOfamily_Maxy}  shows the family of slip-stick periodic orbits  $\Pi_\vare$ of system \cref{fig:AUTOfamily_Maxy}. This can be seen, loosely, as the union of three branches
\[ \Pi_\vare =    \Pi_\vare^l  \cup \Pi_\vare^c \cup \Pi_\vare^r ,\]
where $\Pi_\vare^{l,r}$ are  $\mathcal{O}(\vare^{2/3})$-close to the regular branches $\Pi_0^{l,r}$ \cite{szmolyan2004a}.  The branch $\Pi_\vare^c$  connects $\Pi_\vare^{l}$ to $\Pi_\vare^{r}$ at the rigid body limit, that is $\gamma\gg1$, and it consists   of slip-stick periodic orbits each having two canard segments. \Cref{fig:slip_stick_pos_projection2,fig:slip_stick_pos_projection,fig:slip_stick_pos_projection3} show for $\gamma=31$ two co-existing periodic orbits:  the magenta one belongs to $\Pi_\vare^c$  and the  blue one belongs to $\Pi_\vare^r$. In particular \cref{fig:slip_stick_pos_projection} shows the delay in the slip onset, when the orbit follows the canard, since the slip happens after a time $t= \mathcal{O}(1)$ with respect to when the orbit has intersected the fold lines $f^\pm$. \\
The existence of the branch $\Pi_\vare^c$ is supported by the next \cref{prop:existence_canard}. For this,   let $\Sigma_\text{out}$ be a cross-section orthogonal to the $y$-axis, so that   the fast fibers with base on the singular vrai canard  on $C_r^-$, intersect it on the line $L_\text{out,0}$. Furthermore, define $\Sigma_\text{in}$ the cross-section orthogonal to the $\xi$-axis so that it intersects $C_a$ on the line $L_\text{in,0}$, see \cref{fig:Poincare_transversality}.   
\begin{proposition}\label{prop:existence_canard}  Suppose that for $\vare=0$ there exists a smooth return mechanism $R : \Sigma_\text{out} \to \Sigma_\text{in} $ that maps $L_\text{out,0} \subset \Sigma_\text{out}$ onto $L_\text{in,0}\subset \Sigma_\text{in}$. Suppose furthermore that $L_\text{in,0}=R(L_\text{out,0})$ is transversal to the singular vrai canard $\Upsilon^v$.  Then for  $0<\vare\ll1$   there exists a unique, periodic orbit $\varphi_t^\vare(z)$ that has a canard segment,  and that tends to the singular canard for $\vare\to0$. Furthermore this orbit has a saddle stability with  Floquet multipliers: $\{1,\mathcal{O}(\text{e}^{-c_1/\vare}),\mathcal{O}(\text{e}^{\,c_2/\vare})\},$ with $c_{1,2}\in\mathbb{R}^+$. 
\end{proposition}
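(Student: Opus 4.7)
The plan is to construct a Poincaré first-return map on a suitable cross-section transverse to $\Upsilon^v$ and detect its fixed point as a regular zero by a singular shooting argument, then read off the Floquet multipliers from the contraction/expansion along the attracting and repelling branches of the slow manifold.

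First I would set up the singular periodic candidate as the concatenation of four arcs for $\varepsilon=0$: the portion of the singular vrai canard on $C_a$, its continuation on $C_r^-$ up to where the chosen fast fiber departs, the fast jump to $\Sigma_{\text{out}}$, and finally the global return $R:\Sigma_{\text{out}}\to\Sigma_{\text{in}}$ that by assumption sends $L_{\text{out},0}$ onto $L_{\text{in},0}$. Transversality of $L_{\text{in},0}$ to $\Upsilon^v$ inside $\Sigma_{\text{in}}$ is the crucial geometric input, ensuring that the singular loop is a regular candidate to which standard perturbation arguments apply. By smoothness of $Z_\varepsilon$ outside the fold lines, the return mechanism $R$ and the fast jump persist for $0<\varepsilon\ll 1$ as smooth maps close to their singular counterparts, yielding a perturbed image $L_{\text{in},\varepsilon}\subset\Sigma_{\text{in}}$ which is $\mathcal{O}(\varepsilon^{2/3})$-close to $L_{\text{in},0}$ by Proposition~\ref{prop:regular_stiction_solution} (applied in backward time on the fast fiber segment, since it is regular away from the fold).

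Next, by Fenichel's theorems the sets $C_a$ and $C_r^-$ perturb into slow manifolds $S_{a,\varepsilon}$ and $S_{r,\varepsilon}^-$, both $\mathcal{O}(\varepsilon)$-close to their singular limits in any compact normally hyperbolic subset. Szmolyan--Wechselberger's canard theorem for folded saddles guarantees that these two manifolds intersect transversally (in the extended $(x,y,\theta,\varepsilon)$-space) along a unique maximal canard $\Upsilon^v_\varepsilon$, which is $\mathcal{O}(\varepsilon)$-close to $\Upsilon^v$ for a time of order $\mathcal{O}(1)$. Define the Poincaré map $P_\varepsilon$ on $\Sigma_{\text{in}}$ by following the flow forward along the canard, through the fold, along the fast fiber to $\Sigma_{\text{out}}$, and back by $R$. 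Fixed points of $P_\varepsilon$ on $\Sigma_{\text{in}}$ correspond to intersections of $L_{\text{in},\varepsilon}$ with $\Upsilon^v_\varepsilon\cap \Sigma_{\text{in}}$. Since $L_{\text{in},0}\pitchfork \Upsilon^v$ at the singular candidate point and both objects perturb smoothly, the Implicit Function Theorem produces a unique transverse intersection point $\mathcal{O}(\varepsilon^{2/3})$-close to the singular one, giving the desired canard periodic orbit $\varphi_t^\varepsilon(z)$.

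Finally, for the spectrum of the monodromy: the multiplier $1$ arises trivially from the autonomous flow direction. The remaining two multipliers factor into contributions from each arc of the orbit; away from the fold lines the contributions from the global return and the fast jump are bounded uniformly in $\varepsilon$, while along the canard segment the linearized flow is governed by $\partial_{\hat y}\dot{\hat y}\big|_{C_0}=-\mu_d\phi'(\hat y)$. Integrating this against the slow time spent on $S_{a,\varepsilon}$ (on which $\phi'>0$) produces an exponentially contracting factor $\mathcal{O}(\mathrm{e}^{-c_1/\varepsilon})$, while the segment spent on $S_{r,\varepsilon}^-$ (on which $\phi'<0$) produces an exponentially expanding factor $\mathcal{O}(\mathrm{e}^{c_2/\varepsilon})$, with $c_{1,2}>0$ given by the integrals of $\mu_d|\phi'|$ along the respective canard arcs. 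This yields the claimed spectrum $\{1,\mathcal{O}(\mathrm{e}^{-c_1/\varepsilon}),\mathcal{O}(\mathrm{e}^{c_2/\varepsilon})\}$ and hence the saddle stability.

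The main obstacle I anticipate is making the canard intersection argument uniform across the fold: the naive singular perturbation estimates degenerate at $f^-$, so one must either invoke Szmolyan--Wechselberger's blow-up analysis of the folded saddle to track $S_{a,\varepsilon}\cap S_{r,\varepsilon}^-$ through the fold, or equivalently an exchange-lemma-type statement to carry the transversality of $L_{\text{in},\varepsilon}$ through the canard passage. Once this passage is controlled, the fixed-point and Floquet-multiplier arguments follow by standard calculations.
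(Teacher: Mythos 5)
Your proposal follows essentially the same strategy as the paper: perturb the singular skeleton (maximal canard, fast-fiber foliation, global return), use persistence of the transversality hypothesis, and read off the Floquet multipliers from exponential contraction along $S_{a,\vare}$ and expansion along $S_{r,\vare}^-$. The place where you diverge, and where a small gap appears, is the fixed-point step. You assert that fixed points of $P_\vare$ on $\Sigma_\text{in}$ correspond to intersections of $L_{\text{in},\vare}$ with $\Upsilon^v_\vare\cap\Sigma_\text{in}$ and then apply the Implicit Function Theorem to the transverse crossing; that identification is not literally correct. A transverse crossing of $L_{\text{in},\vare}$ with the maximal canard (or with the trace of the slow manifold on $\Sigma_\text{in}$) is a necessary geometric ingredient, but it does not by itself produce a point $z$ with $P_\vare(z)=z$ --- one still needs to know that the return map squeezes a neighborhood of that crossing into an exponentially thin set anchored on the maximal canard. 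The paper supplies exactly this by arguing in backward time: the backward flow from a neighborhood of $L_{\text{out},\vare}$ contracts at rate $\mathcal{O}(\mathrm{e}^{-c/\vare})$ onto the maximal canard, so its image in $\Sigma_\text{in}$ meets $L_{\text{in},\vare}$ in an exponentially small arc, making the reduced Poincar\'e map on $L_{\text{in},\vare}$ a backward-time contraction with a unique fixed point by Banach's theorem. Your anticipated obstacle (controlling the passage through the fold via Szmolyan--Wechselberger blow-up or an exchange-lemma estimate) is the right concern, but the resolution is the contraction estimate, not a bare IFT. On the positive side, your Floquet-multiplier computation is more explicit than the paper's, giving $c_{1,2}$ as integrals of $\mu_d\lvert\phi'(\hat y)\rvert$ over the slow time spent on the attracting and repelling canard arcs, whereas the paper simply cites the exponential contraction and repulsion; that is a worthwhile refinement to keep.
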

\begin{proof} First notice that for $0<\vare\ll1$ the singular vrai canard $\Upsilon^v$ on $C_r^-$ perturbs into a maximal canard that is $\mathcal{O}(\vare^{2/3})$-close to it. This maximal canard is the base of a foliation of fibers that intersect $\Sigma_\text{out}$ on a line $L_{\text{out},\vare}$ that is $\mathcal{O}(\vare^{2/3})$-close to $L_{\text{out},0}$. The return map $R(z)$ is smooth, so that $R(L_{\text{out},\vare})$ intersects $\Sigma_\text{in}$ in a line $L_{\text{in},\vare}$ that is $\mathcal{O}(\vare^{2/3})$-close to $L_{\text{in},0}$. The line $L_{\text{in},\vare}$ is  transversal to the maximal canard for $\vare$ sufficiently small, since  $L_{\text{in},0}$ was transversal to $\Upsilon^v$, and the perturbation is $\mathcal{O}(\vare^{2/3})$.\\
Now consider the backward flow of $L_{\text{out},\vare}$. This contracts to the maximal canard with an order $\mathcal{O}(\textnormal{e}^{-c/\vare})$. Hence it intersects $L_{\text{in},\vare}$ in an exponentially small set that is   centered around the maximal canard. This means that the reduced Poincar\'e map $P: L_{\text{in},\vare} \to L_{\text{in},\vare}$ is well defined and contractive in backwards time.  Hence it has a unique fixed point. Such fixed point corresponds to a periodic orbit with canard. It follows that the periodic orbit has an exponential contraction to the attracting slow manifold, and an exponential repulsion forward in time around the maximal canard. This determines the Floquet multipliers and consequently, the saddle stability.
\end{proof}
\begin{figure}[h!]
\begin{subfigure}{.45\textwidth}\caption{ }\label{fig:Poincare_transversality}
\includegraphics[width=\textwidth]{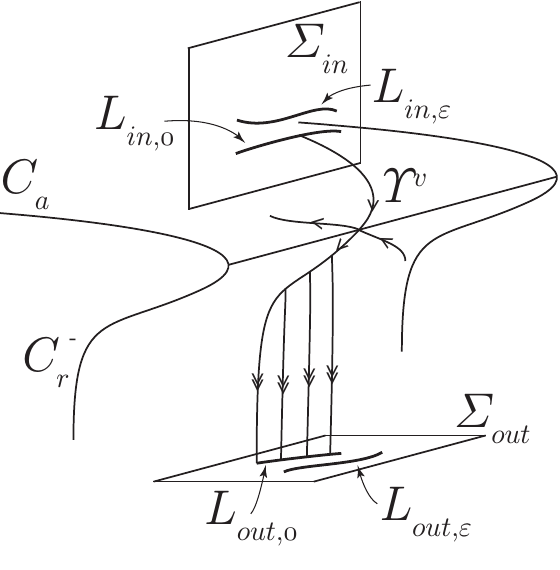}
\end{subfigure}
\begin{subfigure}{.45\textwidth}\caption{ }\centering\label{fig:Transversality_Canard}
\includegraphics[width=1\textwidth]{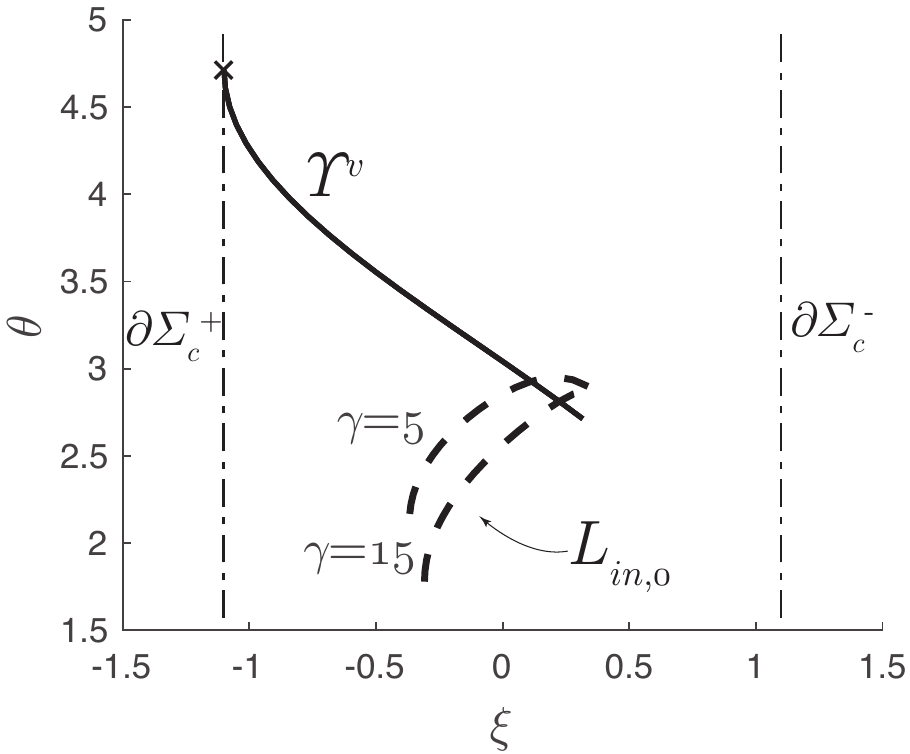}
 \end{subfigure}
\caption{\subref{fig:Poincare_transversality}: Construction of the cross-sections $\Sigma_{\text{in},\text{out}}$. \subref{fig:Transversality_Canard}: Numerical simulation showing that $R(L_{\text{out},0})$ (dashed line) is transversal to $\Upsilon^v$ (solid line) for $\vare=0$ and $\gamma=\{5,15\}$.   The visible tangency is marked with {\tt x}. The dashed-dotted lines are $\partial\Sigma_c^\pm$.    }\label{fig:Canard_pos_existence}
\end{figure}
\Cref{fig:Transversality_Canard}  shows numerically that the discontinuous model  \cref{eq:model_dimensionless} satisfies the assumptions of  \cref{prop:existence_canard}. This supports the existence of the branch $\Pi_\vare^c$ in the regularized model for $\vare$ sufficiently small. Because of the symmetry, the branch $\Pi_\vare^c$ has two canards segments for each  period. 
A {\it canard explosion} may appear when a family of periodic orbits interacts with  a canard. The explosion is defined as the transition from a small oscillation to a relaxation oscillation for an exponentially small   variation in the parameter \cite{krupa2001c}.  However system  \cref{eq:vectorfield_regularized} has no canard explosion: \cref{fig:AUTOfamily_Maxy}  shows that the maximum amplitude of the oscillations does not  increase  with the continuation from $\Pi_\vare^l$ to $\Pi_\vare^c$.  The effect of the canard is instead in the explosion of one of the Floquet multipliers as previously stated in \cref{prop:existence_canard}, and observed numerically in AUTO. 
The saddle stability of the family $\Pi_\vare^c$ implies that the periodic orbits of $\Pi_\vare^c$ are always repelling, even with a time  inversion. Hence these periodic  orbits are not visible in standard simulations. It could be interesting to make an experiment, with very high precision in the initial conditions, where the effects of the canard are measurable. If canard solutions appear, then this would support the validity of the stiction model and of its regularization. 
\begin{proposition}\label{prop:large_gamma}
The branch $\Pi_\vare^c$ is bounded above by  $\gamma = 1/\sqrt{\vare\delta}$ for $0<\vare\ll1$.
\end{proposition}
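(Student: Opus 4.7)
The plan is to reduce the existence of the canard segments that characterize the orbits of $\Pi_\vare^c$ to the existence of a folded saddle of a reduced problem corrected at order $\gamma^2\vare$, and then to show that the folded saddles and folded centers of \cref{prop:reduced_problem} collide in a saddle-node bifurcation exactly when $\gamma^2\vare\delta = 1$. The key observation is that the uncorrected desingularized problem \cref{eq:reducedproblem_short} used in \cref{prop:reduced_problem} drops a term of order $\gamma^2\vare\hat{y}$, which is only admissible for $\gamma^2\vare \ll 1$.

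First I would absorb the frequency ratio into the position by setting $\bar{x}:=\gamma^2 x$ and defining the auxiliary quantity $\bar{\vare}:=\gamma^2\vare$. In these coordinates the slow problem \cref{eq:slow_problem} takes the form
\[
\bar{x}' = \bar{\vare}\,\hat{y},\qquad \vare\hat{y}' = -\bar{x}-\sin\theta-\mu_d\phi(\hat{y}),\qquad \theta' = 1,
\]
so that $\vare$ remains the only singular-perturbation parameter while $\bar{\vare}$ enters as a regular modulation of the slow drift. The critical manifold is $\{\bar{x}+\sin\theta+\mu_d\phi(\hat{y})=0\}$ and still splits into attracting and repelling sheets separated by the fold lines $\hat{y}=\pm\delta$. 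Differentiating the algebraic constraint along a trajectory using $\bar{x}'=\bar{\vare}\hat{y}$ and $\theta'=1$, and then applying the standard desingularization $d\hat{t}=dt/(\mu_d\phi'(\hat{y}))$, yields the corrected desingularized problem
\[
\dot{\hat{y}} = -\cos\theta - \bar{\vare}\,\hat{y},\qquad \dot{\theta}=\mu_d\phi'(\hat{y}),
\]
which is \cref{eq:reducedproblem_short} augmented by the linear drift $-\bar{\vare}\hat{y}$.

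The folded singularities of this corrected system must satisfy $\phi'(\hat{y})=0$, hence $\hat{y}=\pm\delta$, together with $\cos\theta = -\bar{\vare}\hat{y} = \mp\bar{\vare}\delta$. This second equation admits solutions in $\theta\in\mathbb{T}^1$ if and only if $\bar{\vare}\delta\le 1$, i.e.\ $\gamma\le 1/\sqrt{\vare\delta}$. Computing the Jacobian one finds $\det J = -\mu_d\phi''(\hat{y})\sin\theta$ and $\operatorname{tr} J = -\bar{\vare}$, so as $\bar{\vare}$ grows the folded saddle and the (former) folded center on each fold line drift toward each other, and at $\bar{\vare}\delta=1$ they meet at $\sin\theta=0$ where $\det J=0$ and $\operatorname{tr} J<0$. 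This is a standard saddle-node collision of folded singularities, beyond which no folded saddle exists on $f^{\pm}$ and consequently no singular vrai canard $\Upsilon^v$ is available for perturbation.

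Finally I would invoke \cref{prop:existence_canard}: every orbit of $\Pi_\vare^c$ is constructed there as an $\mathcal{O}(\vare^{2/3})$-perturbation of a singular cycle whose canard segment coincides with the maximal canard generated by a folded saddle. In the absence of any folded saddle no such singular cycle exists to perturb, so no orbit of $\Pi_\vare^c$ can persist for $\gamma>1/\sqrt{\vare\delta}$. The main technical obstacle is justifying that the corrected desingularized system genuinely captures the leading-order slow dynamics in the new regime $\bar{\vare}=\mathcal{O}(1)$, since this is outside the scope of the earlier Fenichel analysis; once the rescaling $\bar{x}=\gamma^2 x$ is in place, however, the system is a standard two-slow, one-fast problem with sole singular parameter $\vare$, and the folded-singularity theory of Benoît applies without modification.
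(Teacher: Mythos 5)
Your proof is correct and follows essentially the same route as the paper: absorb $\gamma^2\vare$ into a new $\mathcal{O}(1)$ parameter, derive the corrected desingularized reduced problem $\dot{\hat{y}} = -\Gamma\hat{y} - \cos\theta$, $\dot{\theta} = \mu_d\phi'(\hat{y})$, and locate the saddle-node collision of folded singularities at $\Gamma\delta = 1$. The only cosmetic difference is that you work with $\bar{x} = \gamma^2 x$ where the paper uses $\xi = \gamma^2 x + \sin\theta$ as the slow variable, and you supply the explicit Jacobian trace/determinant check in place of the paper's qualitative description of the folded-center-to-folded-node transition and its appeal to the type-I folded saddle-node classification.
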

\begin{proof}
Differentiate $\xi(x,\theta) = \gamma^2 x + \sin(\theta)$ with respect to time, and rewrite the slow problem \cref{eq:slow_problem} in the $(\xi,\hat{y},\theta)$ variables
\[\begin{aligned}
  \xi' &= \gamma^2 \vare \hat{y} + \cos\theta,\\
\vare \hat{y}' &= - \xi - \mu_d \phi(\hat{y}),\\
\theta' &= 1.
\end{aligned} \]
If $\gamma^2 = \mathcal{O}(1/\vare)$, it makes sense to introduce the rescaling $\Gamma:= \gamma^2\vare$, so that the slow problem  becomes
\[\begin{aligned}
  \xi' &= \Gamma \hat{y} + \cos\theta,\\
\vare \hat{y}' &= - \xi - \mu_d \phi(\hat{y}),\\
\theta' &= 1.
\end{aligned} \]
This system has again a multiple time-scale with critical manifold  \cref{eq:critical_manifold}. Its  reduced problem in the  time $\hat{t}$ is
\be\label{eq:reducedproblem_gammalarge}\begin{aligned}
\dot{\hat{y}} &=  -\Gamma \hat{y} - \cos \theta,\\
\dot{\theta} &=  \mu_d  \phi'(\hat{y}) . 
\end{aligned}\ee
Notice that \cref{eq:reducedproblem_gammalarge} differs from the desingularized problem \cref{eq:reducedproblem_short} only for the term $\Gamma\hat{y}$ in the $\hat{y}$ dynamics. 
The fixed points of \cref{eq:reducedproblem_gammalarge} exist if $\lvert \Gamma\delta \rvert \leq1$ and they have coordinates $\hat{y}=\pm \delta, \, \cos\theta = \mp \Gamma\delta$. The comparison of system \cref{eq:reducedproblem_gammalarge} with the desingularized problem \cref{eq:reducedproblem_short} shows that the fixed points  have shifted along the $\theta$-direction. In particular the saddles have moved backwards while the centers have moved forward. Furthermore the centers have become stable foci. For increasing values of $\Gamma$ the stable  foci  turn into stable nodes. When $\lvert \Gamma\delta \rvert=1$ pairs of  saddles and  nodes collide and  disappear through a saddle-node bifurcation of type I \cite[Lemma 8.5.7]{kuehn2015a}.   Beyond this value canard solutions cease to exist. Such a condition is equivalent to $\gamma = 1/\sqrt{\vare\delta}$.
\end{proof}
The bound  $\gamma=1/\sqrt{\vare\delta}$, that is highlighted in \cref{fig:slip_stick_pos_projection2}, is larger than the  value of $\gamma$ for which the family $\Pi_\vare^c$ folds. In particular, at the turning point,   the folded foci have not turned  into folded nodes yet. Thus the collision of the folded saddles with the folded foci   is not a direct cause of the saddle-node bifurcation of $\Pi_\vare^c$, but gives only an upper bound for the existence of the family. When the folded nodes appear, there might exist further periodic orbits that exit the slow regime through the canard associated to the stable nodes.  \\   
Furthermore, the orbits of $\Pi_\vare^c$  interact  with the folded saddle only, but they do not interact with the other points  of  $\hat{I}^\pm$. 
The regularized problem \cref{eq:vectorfield_regularized} may have other families of periodic orbits that interact with $\hat{I}^\pm$. For  example, a family of pure slip periodic orbits, that reaches $\hat{I}^\pm$ from a fast fiber and then jumps off through a canard-like solution. However, this family would also turn unstable when passing sufficiently close to the canards,  because of the high sensitivity to the initial conditions around  $\mathcal{F}^\pm$. In particular an  explosion in the Floquet multipliers is again expected, because of   \cref{prop:existence_canard}.  
\section{Conclusions}\label{sec:conclusion}
Stiction is a widely used formulation of the friction force, because of its simplicity. However this friction law  has issues of non-uniqueness at the slip onset, that in this manuscript are highlighted in a  friction oscillator model. This model is a discontinuous, non-Filippov system, with subregions having a non-unique forward flow. The forward non-uniqueness is problematic in numerical simulations: here a choice is required and hence valid solutions may be discarded. A regularization of the model resolves the non-uniqueness by finding a repelling slow manifold that separates forward sticking  to forward slipping solutions. Around the slow manifold there is a high sensitivity to the initial conditions. Some trajectories remain close to this slow manifold for some time before being repelled. These trajectories, that mathematically are known as canards, have the physical interpretation of delaying the slip onset  when the external forces have equalled the maximum static friction force at stick. This result could potentially be verified experimentally, thus furthering the understanding of friction-related phenomena. Indeed the appearance of the canard solutions is a feature of stiction friction rather than of the specific friction oscillator model. For example the addition of a damping term on the friction oscillator, or the problem of a mass on an oscillating belt would give rise to similar canard solutions. \\
The canard solutions of the regularized systems can be interpreted, in the discontinuous model, as Carath\'eodory trajectories  that allow the slip onset in points inside the sticking region. These Carath\'eodory orbits are identified by being  backwards transverse to the lines of non-uniqueness. \\
The manuscript shows also that the  regularized system has a family of periodic orbits  $\Pi_\vare$ interacting with the folded saddles. The orbits with canard $\Pi_\vare^c \subset \Pi_\vare$ have a saddle stability, with   Floquet multipliers $\mathcal{O}(e^{ \pm c \vare^{-1}})$. Furthermore, the family $\Pi_\vare^c$ connects, at the rigid body limit, the two families of slip-stick periodic orbits $\Pi_0^{l,r}$ of the discontinuous problem. Further periodic orbits may interact with the canard segments. 

%
 
\section*{Acknowledgments}
The first author  thanks Thibault Putelat and Alessandro Colombo for the useful discussions. We acknowledge the Idella Foundation for supporting the research. This research was partially done whilst the first author was a visiting researcher at the Centre de Recerca Matem\`atica in the Intensive Research Program on Advances in Nonsmooth Dynamics.


\section*{References}
\bibliographystyle{siam}
\bibliography{bibliography/Stiction}                         
\end{document}